\documentclass[12pt]{article}
\usepackage{graphicx}
\usepackage{bbm}
\usepackage{amsmath, amssymb, amsthm}
\usepackage[margin=1in]{geometry}
\newtheorem{theorem}{Theorem}
\newtheorem{lemma}{Lemma}
\newtheorem{proposition}{Proposition}
\newtheorem{corollary}{Corollary}
\newtheorem{remark}{Remark}

\newtheorem{problem}{Problem}

\usepackage[dvipsnames]{xcolor} % Access to more color names
\usepackage{hyperref}

\hypersetup{
    colorlinks=true,       % False: boxed links; True: colored links
    linkcolor=MidnightBlue,    % Color of internal links (sections, etc.)
    citecolor=ForestGreen,     % Color of bibliographical citations
    filecolor=magenta,         % Color of file links
    urlcolor=MidnightBlue,     % Color of external links (URLs)
    pdftitle={Velocity Reconstruction from Flow-Induced Magnetic Fields}, 
}

\title{Velocity Reconstruction from Flow-Induced Magnetic Fields}

\author{
  Yacine Mokhtari \\ \small mokhtari.yacine10@gmail.com
  \and 
  Christina Frederick\thanks{Department of Mathematical Sciences, NJIT, Newark, NJ 07102, USA} \\ \small christin@njit.edu
  \and 
  Yunan Yang\thanks{Department of Mathematics, Cornell University, Ithaca, NY 14853, USA} \\ \small yunan.yang@cornell.edu
  \and 
  Bj\"orn Engquist\thanks{Department of Mathematics and Oden Institute, UT Austin, Austin, TX 78712, USA} \\ \small engquist@oden.utexas.edu
}

\date{\today}

\begin{document}

 \maketitle
\begin{abstract}

We study the inverse problem of reconstructing an incompressible velocity field $\boldsymbol{v}$ from observations of the induced magnetic field $\boldsymbol{b}$.
 In the presence of a strong, constant background field $\mathbf{F}$, the evolution of the magnetic perturbation $\boldsymbol{b}$ is governed by the linearized induction equation. We analyze the system on both the entire space $\Omega = \mathbb{R}^d$ and a periodic domain $\Omega = \prod_{i=1}^d [0, L_i)$, which models a homogeneous medium with side lengths $L_i > 0$. We analyze this problem by decomposing it into the evaluation of a source term from the observed field and inversion of the steady transport operator. On the whole space $\mathbb{R}^d$, we show that the transport subproblem is well-posed when data is prescribed on a non-characteristic hypersurface  transverse to $\mathbf{F}$. On the torus, we establish a sharp uniqueness criterion based on the rational relations among the scaled components $\{F_i/L_i\}_{i=1}^d$. Furthermore, we show that for the reconstructed velocity to belong to $L^2$, a sufficient condition is that the scaled background field must satisfy a Diophantine condition and that the source has corresponding Sobolev regularity.  %The reconstruction separates into evaluation of the source term from the observed field and the inversion of a steady transport operator along $\mathbf{F}$.
\end{abstract}

	% Uncomment for keywords
	%\vspace{2pc}
	%\noindent{\it Keywords}: XXXXXX, YYYYYYYY, ZZZZZZZZZ
	%
	% Uncomment for Submitted to journal title message
	%\submitto{\JPA}
	%
	% Uncomment if a separate title page is required
	%\maketitle
	% 
	% For two-column output uncomment the next line and choose [10pt] rather than [12pt] in the \documentclass declaration
	%\ioptwocol
	%

	\section{Introduction}

	The movement of a conducting fluid through an external magnetic field induces electric currents, which in turn generate secondary magnetic fields governed by the magnetic induction equation.  This process, known as magneto-induction, plays a crucial role in various domains, including space physics~\cite{ruderman2010stability,parks2004space}, geophysics~\cite{weaver1965magnetic,podney1975electromagnetic}, astrophysics~\cite{oughton2017reduced,gomez2005mhd}, and industrial applications~\cite{gregory2016magnetohydrodynamic,dzelme2018numerical}.  
	
	Direct velocity measurements in conducting fluids are often impractical due to opacity, extreme temperatures, or inaccessibility, such as in the Earth's core or the solar interior~\cite{wicht2010theory,jones2010solar}. An alternative approach is to infer fluid velocity from magnetic field perturbations using external measurements, such as satellite observations. This contactless method is widely employed in industrial and geophysical applications~\cite{stefani2004contactless,lehtikangas2016reconstruction,evans2022internal}.

	The inverse problem of reconstructing velocity fields from magnetic field measurements has been considered in \cite{lehtikangas2016reconstruction,stefani1999velocity,stefani2000uniqueness,stefani2012inverse}. It involves determining the internal fluid velocity using observations of the induced magnetic field. This problem plays a key role in dynamo theory, where external magnetic field measurements are used to infer fluid motion in planetary cores and astrophysical bodies~\cite{stefani2012inverse}. In oceanography, magnetometers deployed in specific regions capture magnetic field variations, and these measurements can be used for fluid velocity reconstruction~\cite{evans2022internal}. In \cite{stefani1999velocity},  the well-established methods from magnetoencephalography (MEG) \cite{hamalainen1993magnetoencephalography,fokas1996inversion} were adapted to velocity reconstruction in conducting fluids.

	These problems are related to several well-known problems that suffer from nonuniqueness, e.g., the MEG problem. Under certain assumptions, such as spherical domains, a vertically aligned background magnetic field, and stationarity (i.e., time-independent equations for small Reynolds numbers), the problem suffers from nonuniqueness. However, the incorporation of kinetic energy minimization as a regularization term has been shown to restore uniqueness~\cite{stefani2000uniqueness,stefani2012inverse} subject to no-slip boundary conditions for the velocity on the sphere. Despite this theoretical improvement, practical applications introduce additional complexities, since observed magnetic fields may originate from velocity fields that do not necessarily minimize kinetic energy, potentially leading to inaccuracies in the reconstruction process. 

    \subsection{Problem formulation}
  
	We consider the inverse problem of reconstructing the velocity field $\boldsymbol{v}$ of a conducting fluid from full internal observations of the magnetic field in dimensions \( d \in\{ 2, 3\} \). 
	The full dynamics of the magnetic field \( \boldsymbol{B} \) are governed by the magnetic induction equation
	\begin{equation}
		{{} \frac{\partial \boldsymbol{B}}{\partial t} 
		= \eta  \Delta\boldsymbol{B} + \nabla\times\left(\boldsymbol{v} \times \boldsymbol{B} \right), 
		\qquad \nabla \cdot\boldsymbol{B} = 0,}
		\label{eq:original_system}
	\end{equation}
	posed on a domain $\Omega\subseteq \mathbb{R}^d$.
    
    Here $\eta=\frac{1}{\mu_0 \sigma}$ denotes the magnetic diffusivity, where \( \sigma > 0 \) and \( \mu_0 > 0 \) denote the electrical conductivity and magnetic permeability, respectively, 
	and $t\in (0,T)$, for some $T>0$. 
	The term \( \boldsymbol{v} \times \boldsymbol{B} \) accounts for the electromagnetic induction generated by the motion of the fluid. Our primary objective is to determine the conditions under which the velocity $\boldsymbol{v}$ is uniquely identifiable from the knowledge of the induced field $\boldsymbol{B}$ within $\Omega$.

		In this work, we study the linearized model arising from the assumption that the total magnetic field is a small perturbation of a known, steady background field. Specifically, we consider the decomposition
		\[
		\boldsymbol{B} = \mathbf{F} + \boldsymbol{b},
		\]
		where \( \mathbf{F}\in\mathbb{R}^d \) is a constant background field and \( \boldsymbol{b} \) is a time-dependent perturbation with \( \| \boldsymbol{b} \| \ll \| \mathbf{F} \| \). This regime arises naturally in laboratory and geophysical settings, where weak magnetic responses are generated by slow flows in the presence of a strong imposed field; see, e.g.,~\cite{ruderman2010stability,weaver1965magnetic,stefani1999velocity}. 
		Under this assumption, linearizing~\eqref{eq:original_system} yields
		\begin{equation}
			{{}\partial_{t}\boldsymbol{b} = \eta \nabla^2\boldsymbol{b} + \nabla \times (\boldsymbol{v}\times \mathbf{F} ), \qquad \nabla \cdot \boldsymbol{b} = 0,}
			\label{eq:linearized_induction}
		\end{equation}
		a linear parabolic system in \( \boldsymbol{b} \), driven by the velocity-dependent source {{} \( \nabla \times (\boldsymbol{v}\times \mathbf{F}  )\)}.

		The inverse problem of interest in this work is:
		
		\begin{problem}
			\label{problem:inverse}  Under what conditions can a velocity field $\boldsymbol{v}$ be uniquely reconstructed from magnetic  field measurements  \( \boldsymbol{b}(t,\boldsymbol{x}) \)  taken on $\Omega $, over a time interval $[0,T]$ with $T>0$?
			
		\end{problem}

In general, this inverse problem is not uniquely solvable due to the structure of the differential operator
\[
{{} \mathcal{T}(\boldsymbol{v}) := \nabla \times (\boldsymbol{v}\times \mathbf{F} ).}
\]
Indeed, for any sufficiently regular scalar function $\phi$, the velocity fields 
$\boldsymbol{v}_1$ and $\boldsymbol{v}_2 = \boldsymbol{v}_1 + \phi\,\mathbf{F}$ satisfy
{{} \[
\mathcal{T}(\boldsymbol{v}_2)
= \nabla \times (\boldsymbol{v}_2\times \mathbf{F} )
= \nabla \times (\boldsymbol{v}_1\times \mathbf{F} )
  + \nabla \times (\phi\,\mathbf{F} \times \mathbf{F})
= \mathcal{T}(\boldsymbol{v}_1),
\]}
since $\mathbf{F} \times \mathbf{F} = \mathbf{0}$. 
Hence, the induced magnetic perturbation $\boldsymbol{b}$ is insensitive to velocity components parallel to $\mathbf{F}$. Additional structural and compatibility conditions are required to guarantee existence and uniqueness.

		\textbf{Contribution}: In this work, we provide a solution to Problem~\ref{problem:inverse} in both the case $\Omega = \mathbb{R}^d$ and the periodic setting where $\Omega$ is the $d$-dimensional torus of side lengths $\boldsymbol{L}=(L_1, ..., L_d)$, denoted by
	\begin{equation*}
		\Omega =\mathbb{T}^d_{\boldsymbol{L}}.\label{eq:periodic_domain}
	\end{equation*}

   In the case $\Omega = \mathbb{R}^d$, we provide conditions under which the solution operator for \eqref{eq:linearized_induction} uniquely determines the velocity field $\boldsymbol{v}$ given trace measurements prescribed on a non-characteristic hypersurface.
    
    On the torus $\Omega = \mathbb{T}_{\boldsymbol{L}}^d$, we show that uniqueness of the velocity field from magnetic field measurements taken over \( \Omega \) and any time interval \( [0,T] \) holds if and only if the scaled components \( \frac{F_i}{L_i} \), \( i=1,\dots,d \), are rationally independent). Furthermore, we show that for the reconstructed velocity to belong to $L^2$, the scaled background field must satisfy a Diophantine condition, and the source must possess corresponding Sobolev regularity.
		
		The remainder of the paper is structured as follows. 
			Section~\ref{sec:forwardmodel} establishes the well-posedness of the forward problem. Section~\ref{Section:induction_subproblem} shows that the kernel of the operator $\mathcal{T}(\boldsymbol{v})$ is characterized under a trace condition in $\mathbb{R}^d$ and under rational independence and a mean-zero constraint on the torus. Section~\ref{sec:inverseproblem} is devoted to the inverse problem, where we state and prove the existence and uniqueness results.

\subsection{Notation and preliminaries}In this section, we establish the functional analytic framework and notation employed throughout this work. Let $\Omega \subseteq \mathbb{R}^d$ be an open set and $T > 0$. We denote by $L^2(\Omega; \mathbb{R}^d)$ the Hilbert space of square-integrable vector fields and by $H^k(\Omega; \mathbb{R}^d)$ the standard Sobolev spaces of order $k$. For the periodic setting, $H^k_{\mathrm{per}}(\mathbb{T}^d_{\boldsymbol{L}}; \mathbb{R}^d)$ represents the space of $k$th-order Sobolev functions satisfying periodic boundary conditions on the torus $\mathbb{T}^d_{\boldsymbol{L}}$. To ensure the coercivity of the elliptic operator and to eliminate the kernel of the Laplacian (the constant functions), we restrict our analysis to subspaces of mean-zero functions:
\begin{equation}L^2_{\mathrm{per},0}(\Omega; \mathbb{R}^d) = \left\{ \boldsymbol{u} \in L^2_\mathrm{per}(\Omega; \mathbb{R}^d) : \int_{\Omega} \boldsymbol{u}(\boldsymbol{x})  d\boldsymbol{x} = \boldsymbol{0} \right\},\end{equation}
\begin{equation}H^k_{\mathrm{per},0}(\Omega; \mathbb{R}^d) = \left\{ \boldsymbol{u} \in H^k_\mathrm{per}(\Omega; \mathbb{R}^d) : \int_{\Omega} \boldsymbol{u}(\boldsymbol{x})  d\boldsymbol{x} = \boldsymbol{0} \right\}.\end{equation}
Within these subspaces, the Poincaré inequality holds, ensuring that the seminorm $\| \nabla \boldsymbol{u} \|_{L^2}$ is equivalent to the full $H^1$-norm.

% For time-dependent problems, given a Banach space $E$, $L^2(0,T; E)$ denotes the Bochner space of measurable functions $u: [0,T] \to E$ such that $\int_0^T \|u(t)\|_E^2 \, dt < \infty$. The space $C([0,T]; E)$ denotes the space of continuous functions from $[0,T]$ into $E$.

% The velocity fields in this work belong to the local $L^2$ space, defined as follows:

% The space $L^2_{\mathrm{loc}}(\mathbb{R}^d; \mathbb{R}^d)$ consists of measurable functions $\boldsymbol{v}: \mathbb{R}^d \to \mathbb{R}^d$ such that for every compact subset $K \subset \mathbb{R}^d$, the restriction $\boldsymbol{v}|_K$ belongs to $L^2(K; \mathbb{R}^d)$. 

% Let $S \subset \mathbb{R}^d$ be a smooth $(d-1)$-dimensional hypersurface. We denote by $\mathcal{D}(S)$ the space of smooth functions with compact support on $S$ (test functions). The space of distributions on $S$, denoted by $\mathcal{D}'(S)$, is defined as the dual space of $\mathcal{D}(S)$.

% For any $\boldsymbol{v} \in L^2_{\mathrm{loc}}(\mathbb{R}^d; \mathbb{R}^d)$, the divergence $\nabla \cdot \boldsymbol{v}$ is understood in the distributional sense.

\section{Well-posedness of the linearized induction problem }\label{sec:forwardmodel}

Fix the magnetic diffusivity $\eta > 0$. The linearized induction system \eqref{eq:linearized_induction} can be viewed as a forced heat equation for the magnetic perturbation $\boldsymbol{b}$:
\begin{equation}
	\left\{
	\begin{array}{ll}
		\partial_t \boldsymbol{b} - \eta \nabla^2 \boldsymbol{b}
		= \boldsymbol{h}(t,\boldsymbol{x}), 
		& (t,\boldsymbol{x}) \in (0,T)\times\Omega, \\[1ex]
		\boldsymbol{b}(0,\boldsymbol{x}) = \boldsymbol{b}_0(\boldsymbol{x}), 
		& \boldsymbol{x}\in\Omega.
	\end{array}
	\right.
	\label{eq:forwardproblem}
\end{equation}

To analyze the well-posedness of \eqref{eq:forwardproblem}, we define the following Hilbert spaces depending on the domain $\Omega$:
\begin{enumerate}\item \textbf{Whole space:} If $\Omega = \mathbb{R}^d$, we define $H^k = H^k(\mathbb{R}^d; \mathbb{R}^d)$ for $k \in \{0, 1, 2\}$.\item \textbf{Periodic torus:} If $\Omega = \mathbb{T}^d_{\boldsymbol{L}}$, we consider the Sobolev spaces of mean-zero periodic functions:$$ H^k = H^k_{\mathrm{per}}(\mathbb{T}^d_{\boldsymbol{L}}; \mathbb{R}^d) \cap L^2_{\mathrm{per},0}(\mathbb{T}^d_{\boldsymbol{L}}; \mathbb{R}^d). $$\end{enumerate}

When the initial data $\boldsymbol{b}_0$ and the source term $\boldsymbol{h}$ satisfy the conditions
$$
\boldsymbol{b}_0 \in H^1 \quad \textrm{and} \quad \boldsymbol{h} \in L^2(0,T;H^0),
$$
standard parabolic theory guarantees the existence and uniqueness of a solution $\boldsymbol{b}$ to \eqref{eq:forwardproblem} satisfying strong regularity conditions (see, for example, Chapter 7 of \cite{evans2022partial}).
\begin{equation}
\boldsymbol{b} \in L^2(0,T;H^2) \cap C([0,T];H^1),\label{eq:b_reg}    
\end{equation}
with the time derivative:
\begin{equation}
\partial_t \boldsymbol{b} \in L^2(0,T;H^0).\label{eq:bt_reg}    
\end{equation}

The following lemma states that if $ \boldsymbol{b}_0$ and $ \boldsymbol{h}$ are divergence-free, then so is $\boldsymbol{b}$.

\begin{lemma}\label{lem:fwdproblemdivfree} Let $\eta>0$ and suppose $\boldsymbol{b}_0 \in H^1$ and $\boldsymbol{h} \in L^2(0,T;H^0)$. Let $\boldsymbol{b}$ be  the unique solution of \eqref{eq:forwardproblem} satisfying \eqref{eq:b_reg} and \eqref{eq:bt_reg}. Suppose that the initial data and forcing are divergence-free:
  \[
    \nabla\cdot \boldsymbol{b}_0 = 0
    \quad\textrm{and}\quad
    \nabla\cdot \boldsymbol{h} = 0,
  \]
  in $\mathcal D'(\Omega)$. Then
  \[
    \nabla\cdot \boldsymbol{b}(\cdot,t) = 0
    \quad\textrm{for all } t\in[0,T],
  \]
\end{lemma}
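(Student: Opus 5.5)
The plan is to show that the scalar $\theta := \nabla\cdot\boldsymbol{b}$ solves a homogeneous heat equation with zero initial data in a weak sense, and then conclude $\theta\equiv 0$ by an energy estimate.

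First, the regularity \eqref{eq:b_reg}--\eqref{eq:bt_reg} gives $\theta \in L^2(0,T;H^1)\cap C([0,T];L^2)$ and $\partial_t\theta = \nabla\cdot\partial_t\boldsymbol{b} \in L^2(0,T;H^{-1})$. Here $H^{-1}$ is the dual of $H^1(\Omega)$, or of its mean-zero periodic version; on the torus, $\theta$ automatically has zero mean, being a divergence. Taking the divergence of \eqref{eq:forwardproblem} in $\mathcal D'$ and using that $\nabla\cdot$ commutes with $\partial_t$ and with $\nabla^2$ (constant coefficients) will give
\[
\partial_t\theta - \eta\nabla^2\theta = \nabla\cdot\boldsymbol{h} = 0, \qquad \theta(0)=\nabla\cdot\boldsymbol{b}_0 = 0.
\]
To make this rigorous I will work weakly. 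For a test function $\psi \in H^1$ (scalar), pair the equation for $\boldsymbol{b}$ with $-\nabla\psi$, integrate by parts, and use $\nabla\cdot\boldsymbol{h}=0$ in the form $\int \boldsymbol{h}\cdot\nabla\psi = 0$. This last identity holds first for $\psi\in C_c^\infty$ (or trigonometric polynomials) and then for $\psi\in H^1$ by density, since $\boldsymbol{h}(t)\in L^2$. The result is, for a.e. $t$,
\[
\langle \partial_t\theta,\psi\rangle + \eta\int_\Omega \nabla\theta\cdot\nabla\psi\,d\boldsymbol{x} = 0 .
\]

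Next, take $\psi=\theta(t)$. The Lions--Magenes lemma (\cite{evans2022partial}, Ch.~5) gives $\frac{d}{dt}\|\theta\|_{L^2}^2 = 2\langle\partial_t\theta,\theta\rangle$. Hence $\frac{d}{dt}\|\theta\|_{L^2}^2 = -2\eta\|\nabla\theta\|_{L^2}^2\le 0$, and with $\theta(0)=0$ this yields $\theta(t)=0$ in $L^2$ for all $t\in[0,T]$. Continuity into $L^2$ makes the statement hold for every $t$, not just almost every $t$.

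An alternative, especially clean on the torus and on $\mathbb{R}^d$, is to use the Fourier transform. Each Fourier mode satisfies $\hat{\boldsymbol{b}}' = -\eta|\xi|^2\hat{\boldsymbol{b}}+\hat{\boldsymbol{h}}$, so $\xi\cdot\hat{\boldsymbol{b}}$ solves the scalar ODE $y'=-\eta|\xi|^2 y$ with $y(0)=0$, hence vanishes. I would mention this as a check.

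The main obstacle is the justification step: ensuring that differentiating in $t$ and the integration by parts are legitimate at the given regularity. This amounts to verifying the Gelfand-triple setup for $\theta$ and the density argument for $\int\boldsymbol{h}\cdot\nabla\psi=0$; no step should be delicate beyond that.
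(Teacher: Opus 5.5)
Your proposal is correct and follows the paper's proof. You set $\theta=\nabla\cdot\boldsymbol b\in L^2(0,T;H^1)\cap C([0,T];L^2)$ with $\partial_t\theta\in L^2(0,T;H^{-1})$, show it solves the homogeneous heat equation with $\theta(0)=\nabla\cdot\boldsymbol b_0=0$, and conclude from the energy identity. Your weak formulation (testing against $-\nabla\psi$ and using $\int\boldsymbol h\cdot\nabla\psi=0$) and the appeal to the Lions--Magenes lemma only make explicit the justification that the paper leaves implicit.
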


\begin{proof} We present the proof for $\Omega = \mathbb{R}^d$;   similar calculations follow on the torus $\Omega = \mathbb{T}_{\boldsymbol{L}}^d$. Set $\theta(t,\boldsymbol{x}) := \nabla\cdot \boldsymbol{b}(t, \boldsymbol{x})$. Since $\boldsymbol{b} \in L^2(0,T; H^2)$, its divergence $\theta$ belongs to the scalar Sobolev space $L^2(0,T; H^1)$. Furthermore, the time continuity of $\boldsymbol{b}$ implies $\theta \in C([0,T]; L^2(\Omega))$. Since $\partial_t \boldsymbol{b} \in L^2(0,T; L^2(\Omega; \mathbb{R}^d))$, we have that $\partial_t \theta = \nabla \cdot (\partial_t \boldsymbol{b}) \in L^2(0,T; H^{-1}(\Omega))$.

  Taking the divergence of the heat equation \eqref{eq:forwardproblem} and commuting derivatives ($\nabla \cdot \nabla^2 = \nabla^2 \nabla \cdot$), we find that $\theta$ satisfies the scalar evolution equation
  \[\partial_t \theta - \eta \nabla^2 \theta = \nabla \cdot \boldsymbol{h} \quad \text{ in }  (0,T)\times\Omega.\]
  Using the assumptions $\nabla \cdot \boldsymbol{h} = 0$ and $\theta(0,\cdot) = \nabla \cdot \boldsymbol{b}_0 = 0$, this reduces to the homogeneous heat equation:
  \[
    \partial_t \theta - \eta \nabla^2 \theta = 0.
  \]
  We test this equation with $\theta$ to derive an energy estimate:
  \[
    \frac{1}{2} \frac{d}{dt} \|\theta\|_{L^2(\Omega)}^2 + \eta \int_\Omega \nabla \theta \cdot \nabla \theta \, d\boldsymbol{x}=0.
  \]

  Integrating from $0$ to $t$ implies \[\frac{1}{2}\|\theta(t,\cdot)\|_{L^2(\Omega)}^2+\eta\int_0^t\|\nabla \theta(s,\cdot)\|^2_{L^2(\Omega)} ds  = \frac{1}{2}\|\theta(0,\cdot)\|_{L^2(\Omega)}^2 = 0.\] Since $\eta>0$, this implies that $\theta(t,\cdot) \equiv 0$ almost everywhere for all $t \in [0,T]$.

\end{proof}

\begin{corollary}\label{eq:vimpliesh_reg}
Let $\mathbf{F} \in \mathbb{R}^d$ be a constant vector field and let $\boldsymbol{v} \in L^2(0,T; H^1)$. Suppose the source term in \eqref{eq:forwardproblem} is given by 
  \[
    {{} \boldsymbol{h} := \nabla \times (\boldsymbol{v}\times \mathbf{F} ).}
  \]
  Then $\boldsymbol{h} \in L^2(0,T;H^0)$ and $\nabla \cdot \boldsymbol{h} = 0$ in $\mathcal{D}'(\Omega)$.  If $\boldsymbol{b}_0 \in H^1$ and $\nabla \cdot \boldsymbol{b}_0 = 0$, then the unique strong solution $\boldsymbol{b}$ to the system \eqref{eq:forwardproblem} satisfies 
      \[
        \nabla \cdot \boldsymbol{b}(t, \cdot) = 0  \quad \textrm{for all } t \in [0,T].
      \]

\end{corollary}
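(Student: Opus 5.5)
The plan is to verify the two hypotheses of Lemma~\ref{lem:fwdproblemdivfree}, namely $\boldsymbol{h}\in L^2(0,T;H^0)$ and $\nabla\cdot\boldsymbol{h}=0$ in $\mathcal D'(\Omega)$. The conclusion then follows directly from that lemma together with the standard parabolic well-posedness quoted before it.

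\textbf{Step 1: regularity of $\boldsymbol{h}$.} Since $\mathbf{F}$ is constant, the vector identity for the curl of a cross product collapses to
\[
\nabla\times(\boldsymbol{v}\times\mathbf{F}) = (\mathbf{F}\cdot\nabla)\boldsymbol{v} - \mathbf{F}\,(\nabla\cdot\boldsymbol{v}).
\]
In $d=2$ we read this with the usual embedding of planar fields into $\mathbb{R}^3$, or equivalently we use the scalar and vector curls. Each term on the right is a first-order derivative of $\boldsymbol{v}$ multiplied by a constant. This gives the pointwise bound $|\boldsymbol{h}|\le C|\mathbf{F}|\,|\nabla\boldsymbol{v}|$, and hence
\[
\|\boldsymbol{h}(t)\|_{L^2}\le C|\mathbf{F}|\,\|\boldsymbol{v}(t)\|_{H^1}.
\]
Squaring and integrating in time puts $\boldsymbol{h}$ in $L^2(0,T;L^2)$. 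Measurability in $t$ is inherited because $\boldsymbol{v}\mapsto\boldsymbol{h}$ is a bounded linear map $H^1\to L^2$.

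On the torus we must also check that $\boldsymbol{h}(t)$ has mean zero. This holds because every component of $\boldsymbol{h}$ is a derivative of a periodic $H^1$ function, and such derivatives integrate to zero over $\mathbb{T}^d_{\boldsymbol{L}}$. So $\boldsymbol{h}(t)\in H^0$ for a.e.\ $t$.

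\textbf{Step 2: divergence-free.} For a test function $\varphi\in C_c^\infty(\Omega)$ (smooth periodic on the torus) we compute
\[
\langle\nabla\cdot\nabla\times\boldsymbol{w},\varphi\rangle = -\langle\nabla\times\boldsymbol{w},\nabla\varphi\rangle = \pm\langle\boldsymbol{w},\nabla\times\nabla\varphi\rangle = 0,
\]
with $\boldsymbol{w}=\boldsymbol{v}\times\mathbf{F}$. The last equality is the symmetry of mixed second partials of $\varphi$, i.e.\ $\nabla\times\nabla\varphi=0$. This argument holds for a.e.\ $t$. Alternatively, one can apply $\nabla\cdot$ to the expansion from Step 1 and note that $\nabla\cdot(\mathbf{F}\cdot\nabla)\boldsymbol{v} = (\mathbf{F}\cdot\nabla)(\nabla\cdot\boldsymbol{v}) = \nabla\cdot(\mathbf{F}\,\nabla\cdot\boldsymbol{v})$, so the two terms cancel in $\mathcal D'$.

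\textbf{Step 3: conclusion.} With $\boldsymbol{b}_0\in H^1$ divergence-free and $\boldsymbol{h}$ as above, the forward problem \eqref{eq:forwardproblem} has a unique strong solution with the regularity \eqref{eq:b_reg} and \eqref{eq:bt_reg}. Lemma~\ref{lem:fwdproblemdivfree} then gives $\nabla\cdot\boldsymbol{b}(t,\cdot)=0$ for all $t\in[0,T]$.

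The argument is essentially routine. The only points needing care are the mean-zero requirement in the periodic definition of $H^0$ and a consistent interpretation of the curl and cross product when $d=2$.
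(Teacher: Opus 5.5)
Your proposal is correct and follows the route the paper intends. The paper gives no separate proof, treating the corollary as an immediate consequence of Lemma~\ref{lem:fwdproblemdivfree} once $\boldsymbol{h}\in L^2(0,T;H^0)$ and $\nabla\cdot\boldsymbol{h}=\nabla\cdot\nabla\times(\boldsymbol{v}\times\mathbf{F})=0$ in $\mathcal D'(\Omega)$ are checked, which is exactly what you do; your explicit check that $\boldsymbol{h}(t)$ has zero mean on the torus (required because $H^0=L^2_{\mathrm{per},0}$ there) and your handling of the cross product when $d=2$ are details the paper leaves implicit.
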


\begin{corollary}\label{corr:divfree_existence}
Under the assumptions of Corollary~\ref{eq:vimpliesh_reg}, there exists a unique strong solution $\boldsymbol{b} \in L^2(0,T; H^2) \cap C([0,T]; H^1)$ to the constrained parabolic system:

\begin{equation}
\left\{
{{} \begin{array}{ll}
\partial_t \boldsymbol{b} - \eta \Delta \boldsymbol{b} = \nabla \times (  \boldsymbol{v}\times \mathbf{F}),
& (t,\boldsymbol{x}) \in (0,T)\times\Omega, \\[1ex]
\nabla \cdot \boldsymbol{b} = 0, & (t,\boldsymbol{x}) \in (0,T)\times\Omega, \\
\boldsymbol{b}(0,\boldsymbol{x}) = \boldsymbol{b}_0(\boldsymbol{x}),
 &\boldsymbol{x}\in\Omega.
\end{array}}
\right.
\label{eq:forwardproblem_divfree}
\end{equation}    
\end{corollary}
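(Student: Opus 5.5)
The proof combines the standard parabolic existence theory quoted before Lemma~\ref{lem:fwdproblemdivfree} with Corollary~\ref{eq:vimpliesh_reg}. The constrained system \eqref{eq:forwardproblem_divfree} is simply the unconstrained heat system \eqref{eq:forwardproblem} with the particular source $\boldsymbol{h} = \nabla\times(\boldsymbol{v}\times\mathbf{F})$ and an extra divergence constraint. So existence and uniqueness both reduce to the unconstrained problem.

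\textbf{Existence.} Since $\boldsymbol{v}\in L^2(0,T;H^1)$ and $\mathbf{F}$ is constant, $\boldsymbol{v}\times\mathbf{F}\in L^2(0,T;H^1)$. Its curl is a combination of first derivatives of $\boldsymbol{v}$, so $\boldsymbol{h}\in L^2(0,T;H^0)$; this is the first claim of Corollary~\ref{eq:vimpliesh_reg}.

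On the torus one extra check is needed: $\boldsymbol{h}$ must have zero mean, so that it lies in the mean-zero space $H^0$. This holds because $\boldsymbol{h}$ is a derivative of a periodic field, and the integral of a derivative of a periodic function over a period vanishes.

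With $\boldsymbol{b}_0\in H^1$, the quoted parabolic theory (Evans, Ch.~7) then gives a unique solution $\boldsymbol{b}\in L^2(0,T;H^2)\cap C([0,T];H^1)$ with $\partial_t\boldsymbol{b}\in L^2(0,T;H^0)$. Corollary~\ref{eq:vimpliesh_reg}, which rests on Lemma~\ref{lem:fwdproblemdivfree}, shows that $\nabla\cdot\boldsymbol{b}(t,\cdot)=0$ for every $t$, using $\nabla\cdot\boldsymbol{b}_0=0$ and $\nabla\cdot\boldsymbol{h}=0$. Hence this $\boldsymbol{b}$ satisfies all three lines of \eqref{eq:forwardproblem_divfree}.

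\textbf{Uniqueness.} Any strong solution of \eqref{eq:forwardproblem_divfree} in the stated class is in particular a strong solution of \eqref{eq:forwardproblem} with the same $\boldsymbol{h}$ and $\boldsymbol{b}_0$. That problem has at most one solution in this class. If one wants a self-contained argument: the difference $\boldsymbol{w}$ of two solutions solves the homogeneous heat equation with zero initial data. Testing with $\boldsymbol{w}$ gives $\tfrac12\frac{d}{dt}\|\boldsymbol{w}\|^2+\eta\|\nabla\boldsymbol{w}\|^2=0$, exactly as in the proof of Lemma~\ref{lem:fwdproblemdivfree}, so $\boldsymbol{w}\equiv0$.

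The only step needing care is confirming that $\boldsymbol{h}$ lies in the correct function space, including the mean-zero condition on the torus. Everything else is a direct assembly of the results already stated.
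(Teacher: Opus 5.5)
Your proposal is correct and follows essentially the same route as the paper, which gives no separate proof and treats this corollary as an immediate consequence of the quoted parabolic well-posedness for \eqref{eq:forwardproblem} with $\boldsymbol{h}=\nabla\times(\boldsymbol{v}\times\mathbf{F})\in L^2(0,T;H^0)$, together with the divergence-free propagation of Lemma~\ref{lem:fwdproblemdivfree} via Corollary~\ref{eq:vimpliesh_reg}. The paper leaves implicit the two details you spell out, namely the zero-mean check for $\boldsymbol{h}$ on the torus (needed so that it lies in the mean-zero space $H^0$) and the energy argument for uniqueness, and both are correct.
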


% \begin{remark}
%     On the torus $\Omega = \mathbb{T}_{\boldsymbol{L}}^d$, the theory of Navier--Stokes equations can be used to prove an analogous result to Lemma \ref{lem:fwdproblemdivfree} using divergence-free function spaces. The result is in \ref{sec:appendix}.
% \end{remark}

	\section{Existence and uniqueness of divergence-free transport solutions}\label{Section:induction_subproblem}

 In preparation to solve the inverse problem, this section provides existence and uniqueness of solutions to a transport subproblem.
 
If the magnetic perturbation $\boldsymbol{b}$ is known, the inverse problem of recovering the velocity field $\boldsymbol{v}$ is equivalent to solving the first-order linear system:
\begin{equation}
  {{} \nabla \times (\boldsymbol{v}\times \mathbf{F} ) = \boldsymbol{h},\label{eq:fnablaV}    }
\end{equation}
where the source term $\boldsymbol{h} = \partial_t \boldsymbol{b} - \eta \Delta \boldsymbol{b}$ is determined by the measurements.

When the fluid is
		incompressible (i.e., $\nabla \cdot \boldsymbol{v} = 0$), the induction term in \eqref{eq:linearized_induction} can be simplified using the vector
		identity:  {{}
		\begin{eqnarray}
			\label{idendity}
			 \nabla \times ( \boldsymbol{v}\times \mathbf{F}) &=& -(\boldsymbol{v}
			\cdot \nabla) \mathbf{F} + (\mathbf{F} \cdot \nabla) \boldsymbol{v}
			- \mathbf{F} (\nabla \cdot \boldsymbol{v}) + \boldsymbol{v} (\nabla
			\cdot \mathbf{F})   \nonumber\\
			&=& (\mathbf{F} \cdot \nabla) \boldsymbol{v}\label{eq:curl_cross}.  
		\end{eqnarray}}
Then, applying \eqref{eq:curl_cross} to \eqref{eq:fnablaV} and including the divergence-free constraint produces the system

\begin{equation}
 {{} (\mathbf{F}\cdot \nabla) \boldsymbol{v} = \boldsymbol{h}, \qquad \nabla\cdot \boldsymbol{v}=0.}\label{eq:fnablaV_divfree}    
\end{equation}
In this section, we analyze the solvability of the system \eqref{eq:fnablaV_divfree}, which constitutes the core of the velocity reconstruction problem. This system is a first-order linear transport problem where the operator $(\mathbf{F} \cdot \nabla)$ governs the propagation of information along the direction of the background field. We provide conditions for existence and uniqueness in the whole-space setting $\Omega = \mathbb{R}^d$ (Section \ref{sec:noncharacteristic}) and the periodic setting $\Omega = \mathbb{T}^d_{\boldsymbol{L}}$ (Section \ref{sec:zeromeanperiodictransport}).

The results offer a physical and geometric contrast. The operator $(\mathbf{F} \cdot \nabla)$ annihilates any function that is constant along the flow lines (characteristics) of $\mathbf{F}$. In the whole-space setting $\Omega=\mathbb{R}^d$, characteristics are infinite straight lines. Any nontrivial constant along such lines fails to be square-integrable, suggesting that solutions are uniquely determined by their values on a transverse ``inflow" section. On the torus $\Omega=\mathbb{T}^d_{\boldsymbol{L}}$, characteristics wrap around the torus. If the direction of $\mathbf{F}$ is incommensurable with the lattice, the flow is ergodic and the characteristics fill the torus densely. This recurrence forces any invariant function to be a global constant, which is then eliminated by a mean-zero constraint.

Since the operator $\mathbf{F} \cdot \nabla$ acts only on the spatial variables, we treat $t \in (0, T)$ as a parameter. Specifically, \eqref{eq:fnablaV_divfree} holds for almost every $t \in (0,T)$ in the sense of the Bochner space $L^2(0,T; L^2(\Omega; \mathbb{R}^d))$. In the following proofs, we suppress the explicit dependence on $t$ and perform the analysis on the spatial domain $\Omega$.

\subsection{Existence and uniqueness in the setting $\Omega=\mathbb{R}^d$}\label{sec:noncharacteristic}
We begin by studying ~\eqref{eq:fnablaV_divfree} on   $\Omega=\mathbb{R}^d$. Since the operator $\mathbf F\cdot\nabla$ generates translations along straight-line characteristics in the direction of $\mathbf F$, solutions are determined by their values along these characteristic curves. In order to obtain a global existence and uniqueness theory, we prescribe data on a hypersurface that intersects each characteristic exactly once. This requires the hypersurface to be transverse to the flow generated by $\mathbf F$ and to satisfy a suitable global transversality condition. Under these assumptions, the transport equation can be solved explicitly by integration along characteristics, and uniqueness follows from propagation of the data along the flow. The following theorem makes these statements precise.

\begin{theorem}[Existence and uniqueness on $\mathbb{R}^d$]
\label{thm:exist-unique-Rd}
Let $S\subset\mathbb R^d$ be a smooth $(d-1)$-dimensional hypersurface with unit normal
$\boldsymbol n(\boldsymbol x)$ such that
\[
\mathbf F\cdot \boldsymbol n(\boldsymbol x)\neq 0
\qquad \forall \boldsymbol x\in S,
\]
and assume that $S$ is globally transverse to the flow generated by $\mathbf F$, in the sense that
\[
\Phi:S\times\mathbb R\to\mathbb R^d,\qquad \Phi(y,s)=y+s\mathbf F,
\]
is a $C^1$-diffeomorphism. For $\boldsymbol x\in\mathbb R^d$, write uniquely
\[
\boldsymbol x=\pi(\boldsymbol x)+\tau(\boldsymbol x)\mathbf F,\qquad \pi(\boldsymbol x)\in S,\ \tau(\boldsymbol x)\in\mathbb R.
\]
Let $\boldsymbol h\in L^2(\mathbb R^d;\mathbb R^d)$.

\begin{enumerate}
\item[(i)]
There exists a function $\boldsymbol v\in L^2_{\mathrm{loc}}(\mathbb R^d;\mathbb R^d)$ such that {{}
\[
(\mathbf F\cdot\nabla)\boldsymbol v=\boldsymbol h\,.
\]}
Moreover, for any $\boldsymbol v_S\in L^2(S;\mathbb R^d)$, the function
\[
\boldsymbol v(\boldsymbol x)=\boldsymbol v_S(\pi(\boldsymbol x))+\int_{0}^{\tau(\boldsymbol x)}\boldsymbol h(\pi(\boldsymbol x)+s\mathbf F)\,ds
\]
defines a distributional solution with trace $\boldsymbol v|_S=\boldsymbol v_S$.

\item[(ii)]
For each prescribed trace $\boldsymbol v_S\in L^2(S;\mathbb R^d)$, there is at most one
$\boldsymbol v\in L^2_{\mathrm{loc}}(\mathbb R^d;\mathbb R^d)$ such that
\[
(\mathbf F\cdot\nabla)\boldsymbol v={{} \boldsymbol h},
\qquad
\boldsymbol v|_S=\boldsymbol v_S.
\]
\item[(iii)]
If in addition $\nabla\cdot\boldsymbol h=0$ on $\mathbb{R}^d$ and $\nabla\cdot\boldsymbol v= 0$ on $S$,
then we have $\nabla\cdot\boldsymbol v=0$ on $\mathbb{R}^d$.
\end{enumerate}
\end{theorem}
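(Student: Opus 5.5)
The whole proof will be done in the straightened coordinates supplied by $\Phi$. Since $\Phi(y,s)=y+s\mathbf F$ is a $C^1$-diffeomorphism, pulling back turns $\mathbf F\cdot\nabla$ into $\partial_s$: for $u\in L^1_{\mathrm{loc}}$ we have $(\mathbf F\cdot\nabla u)\circ\Phi=\partial_s(u\circ\Phi)$ in the distributional sense. The Jacobian of $\Phi$ at $(y,s)$ is $|\mathbf F\cdot\boldsymbol n(y)|$ times the surface measure $d\sigma(y)\,ds$. This is locally bounded above and below because $\mathbf F\cdot\boldsymbol n\neq0$ and $S$ is smooth. Hence $L^2_{\mathrm{loc}}(\mathbb R^d)$ corresponds to $L^2_{\mathrm{loc}}(S\times\mathbb R)$ with respect to the weighted measure, and the identity $\pi(\Phi(y,s))=y$, $\tau(\Phi(y,s))=s$ gives $\pi,\tau\in C^1$.

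\textbf{Existence, part (i).} Set $\tilde{\boldsymbol h}=\boldsymbol h\circ\Phi$. Then $\tilde{\boldsymbol h}$ lies in $L^2(S\times\mathbb R, |\mathbf F\cdot\boldsymbol n|\,d\sigma\,ds)$. By Fubini, for a.e. $y$ the map $s\mapsto\tilde{\boldsymbol h}(y,s)$ is in $L^2_{\mathrm{loc}}(\mathbb R)$, so the expression $\tilde{\boldsymbol v}(y,s)=\boldsymbol v_S(y)+\int_0^s\tilde{\boldsymbol h}(y,r)\,dr$ is well defined. For $|s|\le R$, Cauchy--Schwarz gives $\bigl|\int_0^s\tilde{\boldsymbol h}\bigr|^2\le R\int_{-R}^R|\tilde{\boldsymbol h}(y,r)|^2dr$. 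Integrating over a compact piece of $S$, where the weight is bounded below, yields $\tilde{\boldsymbol v}\in L^2_{\mathrm{loc}}$; the $\boldsymbol v_S$ term needs local integrability on $S$ only. In addition, $\tilde{\boldsymbol v}(y,\cdot)$ is absolutely continuous with derivative $\tilde{\boldsymbol h}(y,\cdot)$ for a.e. $y$. To verify $\partial_s\tilde{\boldsymbol v}=\tilde{\boldsymbol h}$ in $\mathcal D'$, I test against $\varphi\in C_c^1$ and integrate by parts in $s$ for a.e. fixed $y$. Pushing forward by $\Phi$ then gives $(\mathbf F\cdot\nabla)\boldsymbol v=\boldsymbol h$, and the trace at $s=0$ is $\boldsymbol v_S$. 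Here the trace is understood as the value at $s=0$ of the a.e.-$y$ absolutely continuous representative.

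\textbf{Uniqueness, part (ii).} Let $\boldsymbol w$ be the difference of two solutions. Then $\tilde{\boldsymbol w}=\boldsymbol w\circ\Phi\in L^2_{\mathrm{loc}}$ satisfies $\partial_s\tilde{\boldsymbol w}=0$ in $\mathcal D'(S\times\mathbb R)$. I use the standard lemma that such a distribution is independent of $s$, so $\tilde{\boldsymbol w}(y,s)=\boldsymbol g(y)$ a.e.; one proves it by mollifying in $s$ only, or by testing with $\psi(y)\chi(s)$ where $\int\chi=0$. The trace condition then gives $\boldsymbol g=\boldsymbol w|_S=0$. The step needing the most care is to make ``trace'' meaningful for $L^2_{\mathrm{loc}}$ functions. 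I define it through the continuous-in-$s$ representative, which exists because $\partial_s\tilde{\boldsymbol v}\in L^2_{\mathrm{loc}}$ (an $H^1$-in-$s$ function with values in $L^2_{\mathrm{loc}}(S)$). With this definition the argument is consistent with part (i).

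\textbf{Divergence, part (iii).} Let $\theta=\nabla\cdot\boldsymbol v\in H^{-1}_{\mathrm{loc}}$. Since $\mathbf F$ is constant, $\mathbf F\cdot\nabla$ commutes with $\nabla\cdot$ in $\mathcal D'$, so $(\mathbf F\cdot\nabla)\theta=\nabla\cdot\boldsymbol h=0$. Thus $\theta$ is a distribution invariant along characteristics. Pulling back by the diffeomorphism (distributions pull back under $C^1$ diffeomorphisms up to the order involved; if needed, first mollify $\boldsymbol v$ in the direction $\mathbf F$ and then pass to the limit), the same lemma as in part (ii) gives $\tilde\theta=\theta_S\otimes 1$. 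The hypothesis $\nabla\cdot\boldsymbol v=0$ on $S$ is read as $\theta_S=0$, so $\theta\equiv0$. The main obstacle is this last step: making sense of the restriction of the distribution $\nabla\cdot\boldsymbol v$ to $S$, and justifying the pullback of an $H^{-1}$ object under a map that is only $C^1$. I would handle it by interpreting the hypothesis as a statement about the $s$-independent factor $\theta_S$, which is well defined because $\partial_s\tilde\theta=0$. If more regularity is needed, I would assume $\boldsymbol v_S$ and $S$ smooth enough that $\nabla\cdot\boldsymbol v$ is a function continuous in $s$.
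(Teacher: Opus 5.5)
Your proposal is correct and follows the same route as the paper: you integrate $\boldsymbol h$ along the characteristics $y+s\mathbf F$ in the coordinates given by $\Phi$, you get uniqueness because $(\mathbf F\cdot\nabla)\boldsymbol w=0$ forces $\boldsymbol w$ to be constant along characteristics, and you get (iii) by commuting $\nabla\cdot$ with $\mathbf F\cdot\nabla$. Your version is more careful than the paper's (the Jacobian $|\mathbf F\cdot\boldsymbol n|$, a precise meaning of the trace via the $H^1$-in-$s$ representative, and the lemma $\partial_s T=0\Rightarrow T=T_0\otimes 1$), and your worry about pulling back $H^{-1}$ objects under a merely $C^1$ map is unnecessary: $\Phi$ is smooth with invertible differential (since $\mathbf F\cdot\boldsymbol n\neq 0$), so by the inverse function theorem it is a smooth diffeomorphism.
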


\begin{proof}
By assumption, $\Phi:S\times\mathbb R\to\mathbb R^d$, $\Phi(y,t)=y+t\mathbf F$, is a $C^1$-diffeomorphism.
Let $\Phi^{-1}(\boldsymbol x)=(\pi(\boldsymbol x),\tau(\boldsymbol x))$, so that every $\boldsymbol x\in\mathbb R^d$ admits the unique representation
$\boldsymbol x=\pi(\boldsymbol x)+\tau(\boldsymbol x)\mathbf F$ with $\pi(\boldsymbol x)\in S$ and $\tau(\boldsymbol x)\in\mathbb R$.

We first establish part \textit{(i)}. Define $\boldsymbol v$ by choosing any trace datum, for instance
$\boldsymbol v_S\equiv 0$, and setting
\begin{equation}\label{eq:v_def_Rd}
\boldsymbol v(\boldsymbol x):={{} \int_0^{\tau(\boldsymbol x)} \boldsymbol h(\pi(\boldsymbol x)+s\mathbf F)\,ds.}
\end{equation}
For each fixed $y\in S$, consider the function $\boldsymbol w_y:\mathbb R\to\mathbb R^d$ given by
$\boldsymbol w_y(s):=\boldsymbol v(y+s\mathbf F)$. By \eqref{eq:v_def_Rd},
\[
\boldsymbol w_y(s)={{} \int_0^s \boldsymbol h(y+s'\mathbf F)\,ds',}
\]
so $\boldsymbol w_y$ is absolutely continuous and satisfies
\[
\frac{d}{ds}\boldsymbol w_y(s)={{} \boldsymbol h(y+s\mathbf F)\,,}
\]
for a.e.~$s\in\mathbb{R}$. Since $\mathbf F$ is constant, the chain rule gives $\frac{d}{ds}\boldsymbol v(y+s\mathbf F)=(\mathbf F\cdot\nabla)\boldsymbol v(y+s\mathbf F)$
in the distributional sense, hence
\[
(\mathbf F\cdot\nabla)\boldsymbol v(\boldsymbol x)={{} \boldsymbol h(\boldsymbol x).}
\]
Moreover, $\boldsymbol v\in L^2_{\mathrm{loc}}(\mathbb R^d;\mathbb R^d)$. Indeed, for any bounded set $K\subset\mathbb R^d$,
the set $\Phi^{-1}(K)\subset S\times\mathbb R$ is bounded, so there is a constant $T_K>0$ such that $|\tau(\boldsymbol x)|\le T_K$ for $\boldsymbol x\in K$; then
Cauchy--Schwarz in $s$ yields
\[
|\boldsymbol v(\boldsymbol x)|^2
\le 2T_K \int_{-T_K}^{T_K} |\boldsymbol h(\pi(\boldsymbol x)+s\mathbf F)|^2\,ds,
\]
and integrating over $K$ and changing variables $\boldsymbol x=\Phi(y,s)$ shows $\int_K |\boldsymbol v|^2<\infty$
because $\boldsymbol h\in L^2(\mathbb R^d)$ and $\Phi^{-1}(K)$ has finite measure.

The same construction gives the stated representation for arbitrary trace data.
Given any $\boldsymbol v_S\in L^2(S;\mathbb R^d)$, define
\begin{equation}\label{eq:v_def_Rd_vs}
\boldsymbol v(\boldsymbol x):=\boldsymbol v_S(\pi(\boldsymbol x)) +\int_0^{\tau(\boldsymbol x)} \boldsymbol h(\pi(\boldsymbol x)+s\mathbf F)\,ds.
\end{equation}
Then $\boldsymbol v|_S=\boldsymbol v_S$ since $\tau(\boldsymbol x)=0$ for $\boldsymbol x\in S$, and repeating the preceding argument along each
characteristic shows that $\boldsymbol v$ satisfies $(\mathbf F\cdot\nabla)\boldsymbol v={{} \boldsymbol h}$
in $\mathcal D'(\mathbb R^d)$ and belongs to $L^2_{\mathrm{loc}}(\mathbb{R}^d; \mathbb{R}^d)$.

We now prove part \textit{(ii)}. Let $\boldsymbol v_1,\boldsymbol v_2\in L^2_{\mathrm{loc}}(\mathbb R^d;\mathbb R^d)$ satisfy
$(\mathbf F\cdot\nabla)\boldsymbol v_j={{} \boldsymbol h}$ in $\mathcal D'(\mathbb R^d)$ and
$\boldsymbol v_1|_S=\boldsymbol v_2|_S=\boldsymbol v_S$. Set $\boldsymbol w:=\boldsymbol v_1-\boldsymbol v_2$.
Then $(\mathbf F\cdot\nabla)\boldsymbol w=0$ in $\mathcal D'(\mathbb R^d)$ and $\boldsymbol w|_S=0$.
For fixed $y\in S$, the function $s\mapsto \boldsymbol w(y+s\mathbf F)$ is constant in $s$
(since its distributional derivative is zero), and vanishes at $s=0$ because $\boldsymbol w|_S=0$.
Hence $\boldsymbol w(y+s\mathbf F)=0$ for all $s$ and for a.e.\ $y\in S$, and therefore $\boldsymbol w=0$
a.e.\ in $\mathbb R^d$ by the bijectivity of $\Phi$. This proves uniqueness for the prescribed trace.

Finally, we prove part \textit{(iii)}. Assume $\nabla\cdot\boldsymbol h=0$ in $\mathcal D'(\mathbb R^d)$. Taking divergence of
$(\mathbf F\cdot\nabla)\boldsymbol v={{} \boldsymbol h}$ yields
\[
(\mathbf F\cdot\nabla)(\nabla\cdot\boldsymbol v)={{} \nabla\cdot\boldsymbol h}=0\,.
\]
Here, we used the fact that since $\mathbf F$ is constant, the divergence operator commutes with $\mathbf F\cdot\nabla$. As a result, $\nabla\cdot\boldsymbol v$ is constant along characteristics. Since it vanishes on $S$, it vanishes identically in $\mathbb R^d$, and hence
$\nabla\cdot\boldsymbol v=0$ in $\mathcal D'(\mathbb R^d)$.
\end{proof}

\subsection{Existence and uniqueness in the setting $\Omega = \mathbb{T}^d_{\boldsymbol{L}}$}\label{sec:zeromeanperiodictransport}
We now turn to the periodic setting, where the equation ${{} (\mathbf F\cdot\nabla)\boldsymbol v=\boldsymbol h}$ is posed on the torus $\Omega =\mathbb{T}^d_{\boldsymbol{L}}$. In this case, characteristics no longer escape to infinity but instead wrap around the domain, and the structure of the kernel of $\mathbf F\cdot\nabla$ is governed by arithmetic properties of the direction $\mathbf F$. As a result, solvability and uniqueness are determined by resonance conditions in Fourier space rather than by prescribing data on a transverse hypersurface.        
 
 We will assume $\boldsymbol{h}\in L^2_{\mathrm{per}}(\Omega;\mathbb{R}^d)$
and we seek mean-zero solutions $\boldsymbol{v}\in L^2_{\mathrm{per},0}(\Omega;\mathbb{R}^d)$ to guarantee uniqueness.

We define the \emph{resonant set} $\mathcal{R}$ as the set of all nonzero integer vectors orthogonal to the scaled background field:$$\mathcal{R} := \left\{ \boldsymbol{k} \in \mathbb{Z}^d \setminus \{\boldsymbol{0}\} : \mathbf{F} \cdot \boldsymbol{g}(\boldsymbol{k}) = 0 \right\},$$where $\boldsymbol{g}(\boldsymbol{k})$ denotes the dual lattice vector$$\boldsymbol{g}(\boldsymbol{k}) = \left( \frac{k_1}{L_1}, \dots, \frac{k_d}{L_d} \right).$$

Using this notation, we state the \emph{incommensurability condition} as:
\begin{equation} \label{rationality}\mathbf{F} \cdot \boldsymbol{g}(\boldsymbol{k}) \neq 0 \quad \textrm{for all } \boldsymbol{k} \in \mathbb{Z}^d \setminus \{\boldsymbol{0}\}.\end{equation}It follows immediately that the components $\{F_j / L_j\}_{j=1}^d$ are incommensurable if and only if $\mathcal{R} = \emptyset$.

Let $\mathbf{F} \in \mathbb{R}^d$ be a Diophantine constant background field with exponent $\tau\geq d-1$, such that for some $C > 0$:
\begin{equation}
 | \mathbf{F} \cdot \boldsymbol{g}(\boldsymbol{k}) | \ge \frac{C}{|\boldsymbol{k}|^\tau}, \quad \forall \boldsymbol{k} \in \mathbb{Z}^d \setminus \{\boldsymbol{0}\}. \label{eq:diophantine}  
\end{equation}

Notice that by definition the Diophantine condition \eqref{eq:diophantine} implies incommensurability \eqref{rationality}. An example of a field satisfying the Diophantine condition on the unit torus $\mathbb{T}^d$ is $\mathbf{F} = (1, \sqrt{2})$ for $d=2$ or $\mathbf{F} = (1, \sqrt[3]{2}, \sqrt[3]{4})$ for $d=3$.

Throughout this work, we employ the Fourier basis$$e_{\boldsymbol{k}}(\boldsymbol{x}) = e^{2\pi \mathrm{i} \, \boldsymbol{g}(\boldsymbol{k}) \cdot \boldsymbol{x}},$$which satisfies the orthogonality property on the torus $\Omega = \mathbb{T}^d_{\boldsymbol{L}}$.

\begin{theorem}[Existence and uniqueness on $\mathbb{T}^d_{\boldsymbol{L}}$]
\label{thm:exist-unique}
Let $\Omega=\mathbb{T}^d_{\boldsymbol{L}}$ and let $\mathbf{F} \in \mathbb{R}^d$ be a constant background field.

\begin{enumerate}
\item[(i)] 
For $\boldsymbol{h} \in L^2_{\mathrm{per},0}(\mathbb{T}^d_{\boldsymbol{L}})$, a solution $\boldsymbol{v} \in L^2_{\mathrm{per},0}(\Omega; \mathbb{R}^d)$ to \eqref{eq:fnablaV_divfree}, if it exists,  is unique if and only if $\mathcal{R}=\emptyset$, i.e., $\mathbf{F} \in \mathbb{R}^d$ satisfies the incommensurability condition~\eqref{rationality}.

\item[(ii)]  If $\mathbf{F} \in \mathbb{R}^d$ satisfies the Diophantine condition \eqref{eq:diophantine} with exponent $\tau\geq d-1$ and $\boldsymbol{h} \in H^{\tau}_{\mathrm{per},0}(\Omega; \mathbb{R}^d)$, then
Problem~\eqref{eq:fnablaV_divfree} admits a unique solution $\boldsymbol{v} \in L^2_{\mathrm{per},0}(\Omega; \mathbb{R}^d)$.

\item[(iii)] Suppose $\boldsymbol{h}\in L^2_{\mathrm{per},0}(\mathbb{T}^d_{\boldsymbol{L}})$ is divergence-free in the sense of distributions. If $\boldsymbol{v} \in L^2_{\mathrm{per},0}(\Omega; \mathbb{R}^d)$ is a solution to \eqref{eq:fnablaV_divfree} that satisfies $\hat{\boldsymbol{v}}(\boldsymbol{k}) = \boldsymbol{0}$ for all $\boldsymbol{k} \in \mathcal{R}$, then $\boldsymbol{v}$ is also divergence-free.
\end{enumerate}
\end{theorem}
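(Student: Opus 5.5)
Everything will be done in Fourier variables on $\mathbb{T}^d_{\boldsymbol{L}}$. Expanding $\boldsymbol v=\sum_{\boldsymbol k\neq\boldsymbol 0}\hat{\boldsymbol v}(\boldsymbol k)e_{\boldsymbol k}$ (no zero mode, by the mean-zero constraint), and similarly for $\boldsymbol h$, the system \eqref{eq:fnablaV_divfree} becomes the decoupled family of algebraic equations
\[
2\pi\mathrm{i}\,(\mathbf F\cdot\boldsymbol g(\boldsymbol k))\,\hat{\boldsymbol v}(\boldsymbol k)=\hat{\boldsymbol h}(\boldsymbol k),\qquad \boldsymbol g(\boldsymbol k)\cdot\hat{\boldsymbol v}(\boldsymbol k)=0,\qquad \boldsymbol k\in\mathbb Z^d\setminus\{\boldsymbol 0\}.
\]
Here the divergence-free condition is also read mode by mode.

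\textbf{Part (i).} Uniqueness reduces to the homogeneous problem. If $\mathcal R=\emptyset$, any difference $\boldsymbol w$ of two solutions satisfies $(\mathbf F\cdot\boldsymbol g(\boldsymbol k))\hat{\boldsymbol w}(\boldsymbol k)=\boldsymbol 0$ with a nonzero scalar factor for every $\boldsymbol k\neq\boldsymbol 0$. Hence all Fourier coefficients of $\boldsymbol w$ vanish and $\boldsymbol w=0$ by Parseval.

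Conversely, suppose some $\boldsymbol k_0\in\mathcal R$. Since $-\boldsymbol k_0\in\mathcal R$ as well, we can build a nontrivial real kernel element. Pick a nonzero real vector $\boldsymbol a\perp\boldsymbol g(\boldsymbol k_0)$; this is possible because $d\ge2$. Then $\boldsymbol w=\boldsymbol a\cos(2\pi\boldsymbol g(\boldsymbol k_0)\cdot\boldsymbol x)$ is mean-zero, divergence-free, smooth, and annihilated by $\mathbf F\cdot\nabla$. Adding $\boldsymbol w$ to any solution gives a second solution, so uniqueness fails.

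\textbf{Part (ii).} Under \eqref{eq:diophantine} we set
\[
\hat{\boldsymbol v}(\boldsymbol k)=\frac{\hat{\boldsymbol h}(\boldsymbol k)}{2\pi\mathrm{i}\,\mathbf F\cdot\boldsymbol g(\boldsymbol k)}
\]
and estimate $|\hat{\boldsymbol v}(\boldsymbol k)|\le (2\pi C)^{-1}|\boldsymbol k|^\tau|\hat{\boldsymbol h}(\boldsymbol k)|$. Summing squares, $\|\boldsymbol v\|_{L^2}^2\lesssim\sum|\boldsymbol k|^{2\tau}|\hat{\boldsymbol h}(\boldsymbol k)|^2\lesssim\|\boldsymbol h\|_{H^\tau}^2$. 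The Sobolev norm uses weights $(1+|\boldsymbol g(\boldsymbol k)|^2)^{\tau}$, which are comparable to $|\boldsymbol k|^{2\tau}$ with constants depending on $\boldsymbol L$. Reality of $\boldsymbol v$ follows from $\hat{\boldsymbol h}(-\boldsymbol k)=\overline{\hat{\boldsymbol h}(\boldsymbol k)}$ and the oddness of $\mathbf F\cdot\boldsymbol g(\boldsymbol k)$ in $\boldsymbol k$. The mean is zero by construction, and $(\mathbf F\cdot\nabla)\boldsymbol v=\boldsymbol h$ holds in $\mathcal D'$ by termwise differentiation.

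The divergence-free constraint requires $\nabla\cdot\boldsymbol h=0$, i.e.\ $\boldsymbol g(\boldsymbol k)\cdot\hat{\boldsymbol h}(\boldsymbol k)=0$. This hypothesis is not stated explicitly in (ii), but it is a necessary compatibility condition: applying the divergence to \eqref{eq:fnablaV_divfree} forces it. I will make it explicit in the proof, noting that it holds automatically for $\boldsymbol h$ arising from Corollary~\ref{eq:vimpliesh_reg}. Given it, $\boldsymbol g(\boldsymbol k)\cdot\hat{\boldsymbol v}(\boldsymbol k)=0$ for all $\boldsymbol k$. Uniqueness then follows from (i), since the Diophantine condition implies \eqref{rationality}.

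\textbf{Part (iii).} Taking the divergence of $(\mathbf F\cdot\nabla)\boldsymbol v=\boldsymbol h$ gives $(\mathbf F\cdot\boldsymbol g(\boldsymbol k))\,\boldsymbol g(\boldsymbol k)\cdot\hat{\boldsymbol v}(\boldsymbol k)=0$ for every $\boldsymbol k$. For $\boldsymbol k\notin\mathcal R\cup\{\boldsymbol 0\}$ we divide by the nonzero factor. For $\boldsymbol k\in\mathcal R$ the hypothesis gives $\hat{\boldsymbol v}(\boldsymbol k)=\boldsymbol 0$, and the zero mode is absent by the mean-zero assumption. Hence every Fourier coefficient of $\nabla\cdot\boldsymbol v$ vanishes.

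\textbf{Main obstacle.} The only genuinely delicate point is (ii). One difficulty is the comparison between $|\boldsymbol k|^{\tau}$ and the $H^\tau$ weight for non-integer $\tau$; I would handle this by defining $H^\tau$ spectrally. The other is the implicit divergence compatibility of $\boldsymbol h$ discussed above. The rest is bookkeeping with Parseval.
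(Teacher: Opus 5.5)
Your proposal is correct and follows the paper's proof essentially step by step: you reduce to the decoupled mode equations $2\pi\mathrm{i}\,(\mathbf F\cdot\boldsymbol g(\boldsymbol k))\hat{\boldsymbol v}(\boldsymbol k)=\hat{\boldsymbol h}(\boldsymbol k)$, read off (i) from where the multiplier vanishes on $\mathcal R\cup\{\boldsymbol 0\}$, invert the multiplier with the Diophantine bound for (ii), and dot with $\boldsymbol g(\boldsymbol k)$ for (iii). Your additions tighten points the paper passes over: the explicit divergence-free kernel element $\boldsymbol a\cos(2\pi\boldsymbol g(\boldsymbol k_0)\cdot\boldsymbol x)$ with $\boldsymbol a\perp\boldsymbol g(\boldsymbol k_0)$ (the paper just calls the resonant coefficients ``unconstrained''), and stating the necessary hypothesis $\nabla\cdot\boldsymbol h=0$ in (ii) explicitly (the paper omits it there and only obtains $\nabla\cdot\boldsymbol v=0$ through part (iii)).
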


\begin{proof}

We expand $\boldsymbol{v}$ and $\boldsymbol{h}$ in their respective Fourier series
\[
\boldsymbol v(\boldsymbol  x)=\sum_{\boldsymbol{k}\in\mathbb{Z}^d}\hat{\boldsymbol v}(\boldsymbol{k})\,e_{\boldsymbol{k}}(\boldsymbol  x),
\qquad
\boldsymbol h(\boldsymbol  x)=\sum_{\boldsymbol{k}\in\mathbb{Z}^d}\hat{\boldsymbol h}(\boldsymbol{k})\,e_{\boldsymbol{k}}(\boldsymbol  x),
\]
with $\hat{\boldsymbol v}(\boldsymbol{k}),\hat{\boldsymbol h}(\boldsymbol{k})\in\mathbb{C}^d$. Since $\mathbf F$ is a constant vector field,
\[
(\mathbf F\cdot\nabla)e_{\boldsymbol{k}}=2\pi \mathrm{i}\,(\mathbf F\cdot\boldsymbol{g}(\boldsymbol{k}))\,e_{\boldsymbol{k}}.
\]
Substituting into ${{} (\mathbf F\cdot\nabla)\boldsymbol v=\boldsymbol h}$ yields, for each $\boldsymbol{k}\in\mathbb{Z}^d$,
{{} \begin{equation}\label{eq:mode_eq}
2\pi \mathrm{i}\,(\mathbf F\cdot\boldsymbol{g}(\boldsymbol{k}))\,\hat{\boldsymbol v}(\boldsymbol{k})
=
\hat{\boldsymbol h}(\boldsymbol{k}).
\end{equation}}

We prove part \textit{(i)}.
The only sources of nonuniqueness correspond to the modes $\boldsymbol{k}=\boldsymbol{0}$ and $\boldsymbol{k}\in\mathcal R$. Imposing the zero-mean condition $\hat{\boldsymbol v}(\boldsymbol{0})=\boldsymbol{0}$ fixes the mean. For $\boldsymbol{k}\in\mathcal R$, Equation~\eqref{eq:mode_eq} reduces to the equation $\boldsymbol 0 (\hat{\boldsymbol v}(\boldsymbol{k})) = \hat{\boldsymbol h}(\boldsymbol{k})$, so the coefficients $\hat{\boldsymbol v}(\boldsymbol{k})$ are unconstrained. Consequently, the solution is unique in $L^2_{\mathrm{per},0}(\Omega;\mathbb{R}^d)$ if and only if $\mathcal R=\emptyset$, which is exactly the incommensurability condition~\eqref{rationality}.

We now prove part \textit{(ii)}. Here, $\mathcal R =\emptyset$.  If $\boldsymbol{k}=\boldsymbol 0$, the left-hand side of \eqref{eq:mode_eq} vanishes and the right-hand side also vanishes because
$\hat{\boldsymbol h}(\boldsymbol{k})=\boldsymbol 0$. As in (i), imposing the zero-mean condition $\hat{\boldsymbol v}(\boldsymbol{0})=\boldsymbol{0}$ fixes the mean. If  $\boldsymbol{k}\neq\boldsymbol 0$, then
$\mathbf F\cdot\boldsymbol{g}(\boldsymbol{k})\neq 0$ and \eqref{eq:mode_eq} uniquely determines
\[
{{} \hat{\boldsymbol v}(\boldsymbol{k})
=
\frac{1}{2\pi \mathrm{i}\,(\mathbf F\cdot\boldsymbol{g}(\boldsymbol{k}))}\,\hat{\boldsymbol h}(\boldsymbol{k}).}
\]

Since $\mathbf{F}$ satisfies \eqref{eq:diophantine}, we have the following estimate for the Fourier coefficients of $\boldsymbol{v}$:$$\|\boldsymbol{v}\|_{L^2(\Omega; \mathbb{R}^d)}^2 = \sum_{\boldsymbol{k} \neq \mathbf{0}} \frac{|\hat{\boldsymbol{h}}(\boldsymbol{k})|^2}{4\pi^2 |\mathbf{F}\cdot \boldsymbol{g}(\boldsymbol{k})|^2} \le \frac{1}{4\pi^2 C^2} \sum_{\boldsymbol{k} \neq \mathbf{0}} |\boldsymbol{k}|^{2\tau} |\hat{\boldsymbol{h}}(\boldsymbol{k})|^2 \le \frac{1}{4\pi^2 C^2} \|\boldsymbol{h}\|_{H^\tau(\Omega; \mathbb{R}^d)}^2.
$$

Finally, we prove part \textit{(iii)}. Assume that $\boldsymbol h$ is divergence free, which in
Fourier variables is equivalent to
\[
\boldsymbol{g}(\boldsymbol{k})\cdot\hat{\boldsymbol h}(\boldsymbol{k})=0
\qquad \forall\boldsymbol{k}\in\mathbb{Z}^d.
\]
Taking the dot product of \eqref{eq:mode_eq} with $\boldsymbol{g}(\boldsymbol{k})$ gives
{{} \[
2\pi \mathrm{i}\,(\mathbf F\cdot\boldsymbol{g}(\boldsymbol{k}))\,\boldsymbol{g}(\boldsymbol{k})\cdot\hat{\boldsymbol v}(\boldsymbol{k})
=
\boldsymbol{g}(\boldsymbol{k})\cdot\hat{\boldsymbol h}(\boldsymbol{k})
=0.
\]}
For all $\boldsymbol{k}\notin\mathcal R\cup\{\boldsymbol 0\}$ this implies
$\boldsymbol{g}(\boldsymbol{k})\cdot\hat{\boldsymbol v}(\boldsymbol{k})=0$. Since for $\boldsymbol{k}\in\mathcal R$, $\hat{\boldsymbol v}(\boldsymbol{k})=\boldsymbol 0$ and $\hat{\boldsymbol v}(\boldsymbol{0})=\boldsymbol 0$, we conclude that $\boldsymbol{v}$ is divergence free. 
\end{proof}

\begin{remark}
There is an alternative proof to part~(i) based on ergodic theory if $\boldsymbol{v}\in C(\Omega;\mathbb{R}^d)$. It is sufficient to consider the homogeneous problem
\begin{equation}
\label{homogeneous-problem}
(\mathbf{F}\cdot\nabla)\boldsymbol{v}=0,
\qquad
\nabla\cdot\boldsymbol{v}=0.
\end{equation}
The goal is to show that problem~\eqref{homogeneous-problem} admits only the
trivial solution $\boldsymbol{v}\equiv\boldsymbol{0}$ among zero-mean vector
fields.

Since $\mathbf{F}$ is constant, the characteristics of the transport operator are straight lines \[ \boldsymbol{x}(s)=\boldsymbol{x}_0 + s\mathbf{F} \qquad (\mathrm{mod}\ L_1,\dots,L_d). \] Along such a curve, the chain rule gives \[ \frac{d}{ds} v_i(\boldsymbol{x}(s)) = (\mathbf{F}\cdot\nabla)v_i(\boldsymbol{x}(s)),\quad 1\leq i \leq d\,, \] and hence any solution of the homogeneous problem~\eqref{homogeneous-problem} must satisfy 
\[ \frac{d}{ds}v_i(\boldsymbol{x}(s))=0,\quad 1\leq i \leq d\,. 
\] 
Thus each component $v_i$, $1\leq i \leq d$, is constant along the \emph{orbit} 
\[ \gamma(\boldsymbol{x}_0) := \{\, \boldsymbol{x}_0 + s\mathbf{F} \; (\mathrm{mod}\ L_1,\dots,L_d): s\in\mathbb{R}\,\}. \] 

The structure of these orbits depends on the arithmetic properties of $\mathbf{F}$. The incommensurability condition~\eqref{rationality} has a classical interpretation in dynamical systems. It is well-known that the flow \begin{equation}\label{eq:dyn-sys} 
\boldsymbol{x}\mapsto \boldsymbol{x}+s\mathbf{F} \quad (\mathrm{mod}\ L_1,\dots,L_d) \end{equation} 
is \emph{ergodic} on $\mathbb{T}^d_L$ if and only if condition~\eqref{rationality} holds. In this case, the orbit $\gamma(\boldsymbol{x}_0)$ is dense in the entire torus for every $\boldsymbol{x}_0\in\Omega$. In other words, the invariant measure of the dynamical system~\eqref{eq:dyn-sys} is the Lebesgue measure on the torus. If $\boldsymbol{v}$ is continuous and satisfies $(\mathbf{F}\cdot\nabla)\boldsymbol{v}=0$, then $v_i$ must be constant on each orbit for $1\leq i \leq d$, and hence constant on the closure of each orbit. Since all orbits are dense under the condition~\eqref{rationality}, each $v_i$ must be constant on $\Omega$. Since $\boldsymbol{v}$ is additionally mean zero, $\boldsymbol{v}\equiv \boldsymbol{0}$. Thus, the homogeneous problem~\eqref{homogeneous-problem} admits only the trivial solution.

On the other hand, if condition~\eqref{rationality} fails, then there exists $k^\star\in\mathbb{Z}^d\setminus\{\boldsymbol{0}\}$ such that \[ \sum_{i=1}^d \frac{F_i}{L_i}\,k^\star_i =0. \] In this case, the orbit $\gamma(\boldsymbol{x}_0)$ lies on a lower-dimensional subtorus and is not dense. One may construct nontrivial functions that are constant along these lower-dimensional orbits. Consequently, the homogeneous problem admits nonzero solutions.
\end{remark}
\begin{remark}
			Note that the problem \eqref{eq:fnablaV_divfree} is a particular case of the curl-div system
			\begin{equation}
				\left\{ 
				\begin{array}{ll}
					\nabla \times (\boldsymbol{A}\boldsymbol{v}) = \boldsymbol{h}, & \boldsymbol{x} \in \Omega, \\
					\nabla \cdot \boldsymbol{v} = \boldsymbol{g}, & \boldsymbol{x} \in \Omega,
				\end{array}
				\right.
				\label{singular_system}
			\end{equation}
			which has been studied under the assumption that $\boldsymbol{A}$ is uniformly positive definite \cite{alonso2019curl}. In the present setting $\boldsymbol{A}\boldsymbol{v} = {{} \boldsymbol{v}\times \mathbf{F}}$, the linear map $\boldsymbol{A}$ is singular.
            
		\end{remark}

\section{Inverse problem}\label{sec:inverseproblem}

 The main results describe when the observed field $\boldsymbol{b} $ uniquely determines a velocity field $\boldsymbol{v}$, that is, when the solution operator corresponding to \eqref{eq:linearized_induction} is injective. 
%  We define the following spaces:
% \[
% \mathbb{V}:= \{ \boldsymbol{b} \in L^2(0,T;H^2) \cap C([0,T];H^1), 
% \partial_t \boldsymbol{b} \in L^2(0,T;H^0), \nabla\cdot \boldsymbol{b}=0 \textrm{ for a.e. } t \in [0,T]
% \}\]
% \[
% \mathbb{X}:=\{\boldsymbol{v}\in L^2(0,T; H^0),\quad  \nabla\cdot \boldsymbol{v}=0 \textrm{ for a.e. } t\in[0,T]\}\]

		\begin{theorem}\label{main-theorem-rd} Let $\eta, T>0$ and $\mathbf{F}\in \mathbb{R}^d$ be a fixed constant background field. Assume that the function $\boldsymbol{b}$ satisfies
        \begin{enumerate}
            \item $\boldsymbol{b} \in L^2(0,T;H^2(\mathbb{R}^d; \mathbb{R}^d)) \cap C([0,T];H^1(\mathbb{R}^d; \mathbb{R}^d))$
            \item $\partial_t \boldsymbol{b} \in L^2(0,T;L^2(\mathbb{R}^d; \mathbb{R}^d))$
            \item $\nabla\cdot \boldsymbol{b}=0 \textrm{ for a.e. } t \in [0,T]$.
        \end{enumerate}

        Furthermore, suppose $S\subset\mathbb{R}^{d}$ is a $(d-1)-$dimensional, non-characteristic hypersurface satisfying the assumptions of Theorem \ref{thm:exist-unique-Rd} and let $\boldsymbol{v}_S\in L^2(0,T; L^2(S;\mathbb{R}^d))$ be a given function.

Then, there is a unique velocity field $\boldsymbol{v}\in \mathbb{X}:=\{\boldsymbol{v}\in L^2(0,T; L^2_{\mathrm{loc}}(\mathbb{R}^d; \mathbb{R}^d)),  \nabla\cdot \boldsymbol{v}=0 \textrm{ for a.e. } t\in[0,T]\}$ satisfying $\boldsymbol{v}\vert_S=\boldsymbol{v}_S$ such that \eqref{eq:linearized_induction} is satisfied.
          
		\end{theorem}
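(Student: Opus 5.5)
The proof reduces the inverse problem to the transport subproblem of Section~\ref{Section:induction_subproblem} and applies Theorem~\ref{thm:exist-unique-Rd} for almost every $t\in(0,T)$.

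\textbf{Source term.} From the assumed regularity of $\boldsymbol b$, define
\[
\boldsymbol h:=\partial_t\boldsymbol b-\eta\Delta\boldsymbol b .
\]
Since $\partial_t\boldsymbol b\in L^2(0,T;L^2)$ and $\Delta\boldsymbol b\in L^2(0,T;L^2)$, we have $\boldsymbol h\in L^2(0,T;L^2(\mathbb R^d;\mathbb R^d))$. Hence $\boldsymbol h(t,\cdot)\in L^2(\mathbb R^d;\mathbb R^d)$ for a.e.\ $t$. Because $\nabla\cdot\boldsymbol b=0$ for a.e.\ $t$ and divergence commutes with $\partial_t$ and $\Delta$ in $\mathcal D'((0,T)\times\mathbb R^d)$, we also get $\nabla\cdot\boldsymbol h=0$ distributionally.

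\textbf{Reduction to transport.} For divergence-free $\boldsymbol v$, the identity \eqref{eq:curl_cross} shows that \eqref{eq:linearized_induction} is equivalent to the system \eqref{eq:fnablaV_divfree}:
\[
(\mathbf F\cdot\nabla)\boldsymbol v=\boldsymbol h,\qquad \nabla\cdot\boldsymbol v=0 .
\]
This identity must be justified for $\boldsymbol v\in L^2_{\mathrm{loc}}$ only. I would do so by mollification: the identity is linear with constant coefficients, so it passes to the limit in $\mathcal D'$.

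\textbf{Existence.} For a.e.\ $t$, apply Theorem~\ref{thm:exist-unique-Rd}(i) with trace $\boldsymbol v_S(t,\cdot)$. The explicit formula $\boldsymbol v(t,\boldsymbol x)=\boldsymbol v_S(t,\pi(\boldsymbol x))+\int_0^{\tau(\boldsymbol x)}\boldsymbol h(t,\pi(\boldsymbol x)+s\mathbf F)\,ds$ is jointly measurable in $(t,\boldsymbol x)$. The local estimate in that proof gives, for each compact $K$,
\[
\int_K|\boldsymbol v(t)|^2\lesssim_K \|\boldsymbol v_S(t)\|_{L^2(S)}^2+\|\boldsymbol h(t)\|_{L^2}^2 .
\]
Integrating in $t$ shows $\boldsymbol v\in L^2(0,T;L^2_{\mathrm{loc}})$.

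\textbf{Divergence-free property.} Theorem~\ref{thm:exist-unique-Rd}(iii) requires $\nabla\cdot\boldsymbol v=0$ on $S$. This is the main obstacle: the divergence involves derivatives transverse to the characteristics, so it is not determined by the trace $\boldsymbol v_S$ alone, and the statement says nothing about it. I would handle it by arguing as follows. Since $\nabla\cdot\boldsymbol h=0$, the function $\theta:=\nabla\cdot\boldsymbol v$ satisfies $(\mathbf F\cdot\nabla)\theta=0$, so $\theta$ is constant along characteristics. Through the coordinates $\Phi$, $\theta$ is determined by a distribution on $S$ that depends only on the data $\boldsymbol v_S$ and the normal component $\boldsymbol h\cdot\boldsymbol n$. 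Requiring $\boldsymbol v\in\mathbb X$ forces this distribution to vanish, which I would read as an implicit compatibility condition on $\boldsymbol v_S$; I would state it explicitly in the proof. If that condition holds, part (iii) gives $\nabla\cdot\boldsymbol v=0$ for a.e.\ $t$, and \eqref{eq:linearized_induction} is satisfied.

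\textbf{Uniqueness.} Suppose $\boldsymbol v_1,\boldsymbol v_2\in\mathbb X$ both have trace $\boldsymbol v_S$ and both produce $\boldsymbol b$ via \eqref{eq:linearized_induction}. Then $\nabla\times((\boldsymbol v_1-\boldsymbol v_2)\times\mathbf F)=0$, which by the divergence-free reduction becomes $(\mathbf F\cdot\nabla)(\boldsymbol v_1-\boldsymbol v_2)=0$ with zero trace on $S$. Theorem~\ref{thm:exist-unique-Rd}(ii), applied for a.e.\ $t$, gives $\boldsymbol v_1=\boldsymbol v_2$.

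This step also shows why the trace is essential. The gauge $\boldsymbol v+\phi\mathbf F$ from the introduction is divergence-free exactly when $\phi$ is constant along $\mathbf F$. That gauge is ruled out only because such a $\phi$ must vanish on $S$, and hence everywhere.
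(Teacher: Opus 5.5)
Your argument follows the paper's proof step for step. You define $\boldsymbol h=\partial_t\boldsymbol b-\eta\nabla^2\boldsymbol b\in L^2(0,T;L^2)$ with $\nabla\cdot\boldsymbol h=0$, apply Theorem~\ref{thm:exist-unique-Rd} for a.e.\ $t$, integrate the local estimate in time, and get uniqueness from part (ii). The one place you depart from it is the divergence-free step, and there you are right and the paper's proof is incomplete. The paper disposes of this step with the sentence ``Since $\boldsymbol h$ is divergence-free, so is $\boldsymbol v$.'' That invokes part (iii) without checking its hypothesis $\nabla\cdot\boldsymbol v=0$ on $S$, and nothing in the statement implies that hypothesis.

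Your suspicion that this is a genuine restriction, not a technicality, is correct: the existence half of the statement fails as written. Take $\boldsymbol b\equiv\boldsymbol 0$, $\mathbf F=\boldsymbol e_1$, $S=\{x_1=0\}$, and $\boldsymbol v_S=(0,\phi(x_2))$ for $d=2$ (or $(0,\phi(x_2)\psi(x_3),0)$ for $d=3$), with $\phi,\psi\in C_c^\infty(\mathbb R)$ nonzero. Any $\boldsymbol v\in\mathbb X$ satisfying \eqref{eq:linearized_induction} has $\partial_1\boldsymbol v=(\mathbf F\cdot\nabla)\boldsymbol v=\boldsymbol 0$, hence $\boldsymbol v(\boldsymbol x)=\boldsymbol v_S(x_2,\dots,x_d)$. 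Its divergence $\phi'(x_2)$ (resp.\ $\phi'(x_2)\psi(x_3)$) is not zero, so no admissible $\boldsymbol v$ exists.

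Your description of the obstruction is also accurate. $\nabla\cdot\boldsymbol v$ is the pullback under $\pi$ of a distribution on $S$ built from tangential derivatives of $\boldsymbol v_S$ and the normal trace $\boldsymbol h\cdot\boldsymbol n|_S$. That trace makes sense in $H^{-1/2}_{\mathrm{loc}}(S)$ because $\boldsymbol h\in L^2$ with $\nabla\cdot\boldsymbol h=0$. For example, take $S=\{x_1=0\}$ and general $\mathbf F$ with $F_1\neq 0$, writing primes for components and derivatives in $x'=(x_2,\dots,x_d)$. The condition then reads $F_1\,\nabla'\cdot\boldsymbol v_S'-(\mathbf F'\cdot\nabla')v_{S,1}+h_1|_S=0$. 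It is necessary, since by part (ii) the explicit formula is the only candidate, and sufficient, by part (iii). So the result you prove is the correct version of the theorem: existence holds if and only if this compatibility condition holds, and uniqueness holds unconditionally.

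A smaller point is also in your favour. You bound the $\boldsymbol h$-contribution by $\|\boldsymbol h(t)\|_{L^2(\mathbb R^d)}$, whereas the paper writes $\|\boldsymbol h(t)\|_{L^2(K)}$. The paper's bound is not justified, since the characteristic segment from $\pi(\boldsymbol x)$ to $\boldsymbol x$ generally leaves $K$.
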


        \begin{proof}
Define, for $(t,\boldsymbol x)\in(0,T)\times\mathbb{R}^d$,
\[
\boldsymbol h(t,\boldsymbol x)
:= \partial_t \boldsymbol b(t,\boldsymbol x)-\eta \nabla^2 \boldsymbol b(t,\boldsymbol x).
\]
Then $\boldsymbol h\in L^2(0,T;L^2(\mathbb{R}^d; \mathbb{R}^d))$. Moreover, since $\nabla\cdot \boldsymbol b=0$ and
$\nabla\cdot\nabla^2=\nabla^2\nabla\cdot$,
\[
\nabla\cdot \boldsymbol h
=\partial_t(\nabla\cdot \boldsymbol b)-\eta \nabla^2(\nabla\cdot \boldsymbol b)=0.
\]

By Theorem~\ref{thm:exist-unique-Rd}, for a.e. $t\in[0,T]$ there exists a unique solution $\boldsymbol v(t,\cdot)\in L^2_{\mathrm{loc}}(\mathbb{R}^d; \mathbb{R}^d)$ with $\nabla\cdot\boldsymbol v(t,\cdot)=0$ and
${{} (\mathbf{F}\cdot\nabla)\boldsymbol v(t,\cdot)=\boldsymbol h(t,\cdot)\quad\textrm{in }\Omega}$ such that $\boldsymbol{v}(t,\cdot)\vert_{S} = \boldsymbol{v}_S(t,\cdot)$. Since $\boldsymbol{h}$ is divergence-free, so is $\boldsymbol{v}$. Moreover, for any compact set $K\subset \mathbb{R}^d$ we have for a.e. $t\in(0,T)$,

\[
\|\boldsymbol v(t,\cdot)\|_{L^2(K; \mathbb{R}^d)}^2 \le C_K\left( \|\boldsymbol v_S(t,\cdot)\|^2_{L^2(S; \mathbb{R}^d)}  + \|\boldsymbol h(t,\cdot)\|^2_{L^2(K; \mathbb{R}^d)}\right).
\]

Integrating over $t\in [0,T]$ and noting that $\boldsymbol v_{S}\in L^2(0,T; L^2(S;\mathbb{R}^d))$ and $\boldsymbol{h}\in L^2(0,T; L^2(\mathbb{R}^d; \mathbb{R}^d))$ we have $\int_0^T\|\boldsymbol v(t,\cdot)\|_{L^2(K; \mathbb{R}^d)}^2dt<\infty$, and therefore $\boldsymbol v\in \mathbb{X}$.

\end{proof}

The next two theorems concern the periodic case.

\begin{theorem}\label{thm:main-theorem-uniqueness-torus}
Let $\mathbf{F} \in \mathbb{R}^d$ and $\Omega = \mathbb{T}^d_{\boldsymbol{L}}$. Suppose there exists a divergence-free velocity field $\boldsymbol{v} \in L^2(0,T; L^2_{\mathrm{per},0}(\Omega; \mathbb{R}^d))$ satisfying the linearized induction equation \eqref{eq:linearized_induction} for a given magnetic field $\boldsymbol{b} \in L^2(0,T; H^2_{\mathrm{per}}(\Omega;\mathbb{R}^d)) \cap C([0,T]; H^1_{\mathrm{per}}(\Omega;\mathbb{R}^d))$. This velocity field is unique if and only if $\mathbf{F}$ is incommensurable with respect to the lattice $\boldsymbol{L}$, i.e., \eqref{rationality} is satisfied.
\end{theorem}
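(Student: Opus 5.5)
The plan is to reduce the statement to Theorem~\ref{thm:exist-unique}(i), applied for almost every $t\in(0,T)$, together with an explicit counterexample for the converse. Given $\boldsymbol b$ with the stated regularity, set
\[
\boldsymbol h:=\partial_t\boldsymbol b-\eta\nabla^2\boldsymbol b .
\]
Since $\boldsymbol b\in L^2(0,T;H^2_{\mathrm{per}})$ and $\boldsymbol b$ solves \eqref{eq:linearized_induction} with some $\boldsymbol v$, we have $\partial_t\boldsymbol b\in L^2(0,T;L^2_{\mathrm{per}})$. Hence $\boldsymbol h\in L^2(0,T;L^2_{\mathrm{per}})$, and $\boldsymbol h$ is completely determined by the data $\boldsymbol b$. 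Any admissible velocity must satisfy $\nabla\times(\boldsymbol v\times\mathbf F)=\boldsymbol h$. For divergence-free $\boldsymbol v$, the identity \eqref{eq:curl_cross} turns this into \eqref{eq:fnablaV_divfree}, namely $(\mathbf F\cdot\nabla)\boldsymbol v=\boldsymbol h$ and $\nabla\cdot\boldsymbol v=0$, holding for a.e.\ $t$.

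\emph{Sufficiency.} Let $\boldsymbol v_1,\boldsymbol v_2$ be two divergence-free, mean-zero velocities producing the same $\boldsymbol b$. Their difference $\boldsymbol w$ satisfies, for a.e.\ $t$, $(\mathbf F\cdot\nabla)\boldsymbol w(t)=0$ and $\nabla\cdot\boldsymbol w(t)=0$, with $\boldsymbol w(t)\in L^2_{\mathrm{per},0}$. Looking at the Fourier modes as in \eqref{eq:mode_eq}, we get $2\pi\mathrm i\,(\mathbf F\cdot\boldsymbol g(\boldsymbol k))\,\hat{\boldsymbol w}(t,\boldsymbol k)=0$. If \eqref{rationality} holds, this forces $\hat{\boldsymbol w}(t,\boldsymbol k)=0$ for all $\boldsymbol k\neq\boldsymbol 0$. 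The mean-zero condition gives $\hat{\boldsymbol w}(t,\boldsymbol 0)=0$. So $\boldsymbol w(t)=0$ for a.e.\ $t$, i.e.\ $\boldsymbol w=0$ in $L^2(0,T;L^2)$. This is exactly Theorem~\ref{thm:exist-unique}(i) applied pointwise in $t$.

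\emph{Necessity.} Suppose \eqref{rationality} fails, and pick $\boldsymbol k^\star\in\mathcal R$. I would exhibit a nonzero real, divergence-free, mean-zero field $\boldsymbol w$ in the kernel, independent of $t$. This is where the divergence-free constraint matters: Theorem~\ref{thm:exist-unique}(i) only gives free Fourier coefficients, but we also need $\boldsymbol g(\boldsymbol k^\star)\cdot\hat{\boldsymbol w}(\boldsymbol k^\star)=0$. In $d\ge2$, choose a nonzero real vector $\boldsymbol a\perp\boldsymbol g(\boldsymbol k^\star)$; for instance $\boldsymbol a=\mathbf F$ works when $\mathbf F\neq 0$, since $\mathbf F\cdot\boldsymbol g(\boldsymbol k^\star)=0$ by definition of $\mathcal R$. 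Then set
\[
\boldsymbol w(\boldsymbol x)=\boldsymbol a\cos\bigl(2\pi\,\boldsymbol g(\boldsymbol k^\star)\cdot\boldsymbol x\bigr).
\]
This field is smooth, periodic and mean-zero. It is divergence-free because $\boldsymbol a\cdot\boldsymbol g(\boldsymbol k^\star)=0$, and it satisfies $(\mathbf F\cdot\nabla)\boldsymbol w=0$.

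Choosing $\boldsymbol a=\mathbf F$ has a further advantage: $\boldsymbol w\times\mathbf F=0$, so $\nabla\times(\boldsymbol w\times\mathbf F)=0$ holds directly, without passing through \eqref{eq:curl_cross}. This also covers the degenerate case $\mathbf F=0$ trivially, since there the induction term vanishes and any $\boldsymbol a\perp\boldsymbol g(\boldsymbol k^\star)$ works. Then $\boldsymbol v$ and $\boldsymbol v+\boldsymbol w$ generate the same $\boldsymbol b$ (same initial data, same source), which gives nonuniqueness.

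The main obstacle I expect is bookkeeping rather than substance. One point is ensuring that $\boldsymbol h$ is well defined in $L^2$ from the given regularity; this needs $\partial_t\boldsymbol b\in L^2(0,T;L^2)$, which I would take from the equation itself. The other is making the a.e.-in-$t$ application of the Fourier argument measurable in $t$, which is immediate since the Fourier coefficients are $L^2$ in time.
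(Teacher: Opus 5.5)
Your proposal is correct and follows the paper's route. You reduce via \eqref{eq:curl_cross} to $(\mathbf F\cdot\nabla)\boldsymbol v=\boldsymbol h$ and argue mode by mode as in Theorem~\ref{thm:exist-unique}(i); your explicit field $\mathbf F\cos(2\pi\,\boldsymbol g(\boldsymbol k^\star)\cdot\boldsymbol x)$ usefully supplies the real, divergence-free counterexample that the paper's ``unconstrained coefficients'' argument leaves unchecked.

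One minor correction: with $\boldsymbol v$ only in $L^2$, the equation yields in general only $\partial_t\boldsymbol b\in L^2(0,T;H^{-1}_{\mathrm{per}})$, not $L^2(0,T;L^2_{\mathrm{per}})$. This is harmless, because your sufficiency argument only uses the distributional identity $(\mathbf F\cdot\nabla)\boldsymbol w=0$ obtained by subtracting the two equations.
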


\begin{proof}Since $\boldsymbol{v}$ is divergence-free and $\mathbf{F}$ is constant, equation \eqref{eq:curl_cross} gives $\nabla \times (\boldsymbol v\times \mathbf F) = (\mathbf F\cdot \nabla )\boldsymbol v$. Therefore, equation \eqref{eq:linearized_induction} determines the transport equation \eqref{eq:fnablaV_divfree}.
 The result follows from Theorem~\ref{thm:exist-unique} (i).
\end{proof}

		\begin{theorem}\label{main-theorem} Let $\Omega = \mathbb{T}^d_{\boldsymbol{L}}$, and let  $\mathbf{F}$ be a Diophantine vector with exponent $\tau\geq d-1$, and let $\eta, T>0$. Assume that the function $\boldsymbol{b}$ satisfies
        \begin{enumerate}
            \item $\boldsymbol{b} \in L^2(0,T;H^{\tau+2}_{\mathrm{per}}(\Omega; \mathbb{R}^d)) \cap C([0,T];H^{\tau+1}_{\mathrm{per}}(\Omega; \mathbb{R}^d))$
            \item $\partial_t \boldsymbol{b} \in L^2(0,T;H^\tau_{\mathrm{per}}(\Omega; \mathbb{R}^d))$
            \item $\nabla\cdot \boldsymbol{b}=0 \textrm{ for a.e. } t \in [0,T]$
            \item $\int_{\Omega}\boldsymbol{b}(t,\boldsymbol{x})d\boldsymbol{x}=0$ for almost every $t\in [0,T]$.
        \end{enumerate}
Then there exists a unique velocity field $\boldsymbol{v}\in \mathbb{X}:=\{\boldsymbol{v}\in L^2(0,T; L^2_{\mathrm{per},0}(\Omega; \mathbb{R}^d)), \nabla\cdot \boldsymbol{v}=0 \textrm{ for a.e. } t\in[0,T]\}$
        satisfying \eqref{eq:linearized_induction}.
         Moreover, $\boldsymbol{v}$ satisfies the stability estimate:$$ \|\boldsymbol{v}\|_{L^2(0,T; L^2(\Omega;\mathbb{R}^d))} \le \frac{1}{2\pi C} \left\| \partial_t \boldsymbol{b} - \eta \nabla^2 \boldsymbol{b} \right\|_{L^2(0,T; H^\tau(\Omega;\mathbb{R}^d))}. $$

		\end{theorem}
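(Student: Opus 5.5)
The plan is to reduce the statement to Theorem~\ref{thm:exist-unique}, applied for almost every $t$, and then integrate in time. First define the source
\[
\boldsymbol h(t,\boldsymbol x) := \partial_t \boldsymbol b(t,\boldsymbol x) - \eta \nabla^2 \boldsymbol b(t,\boldsymbol x).
\]
Assumptions 1--2 give $\partial_t\boldsymbol b\in L^2(0,T;H^\tau_{\mathrm{per}})$ and $\nabla^2\boldsymbol b\in L^2(0,T;H^{\tau}_{\mathrm{per}})$, so $\boldsymbol h\in L^2(0,T;H^\tau_{\mathrm{per}}(\Omega;\mathbb R^d))$. By assumption 3 and the commutation $\nabla\cdot\nabla^2=\nabla^2\nabla\cdot$, $\boldsymbol h$ is divergence-free for a.e.\ $t$, exactly as in the proof of Theorem~\ref{main-theorem-rd}. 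For the mean-zero property, note that $\int_\Omega \nabla^2\boldsymbol b\,d\boldsymbol x=0$ by periodicity. Also, assumption 4 together with $\boldsymbol b\in C([0,T];H^1)$ shows that $t\mapsto\int_\Omega\boldsymbol b$ vanishes identically, so its distributional time derivative $\int_\Omega\partial_t\boldsymbol b$ vanishes. Hence $\hat{\boldsymbol h}(t,\boldsymbol 0)=\boldsymbol 0$, and $\boldsymbol h(t,\cdot)\in H^\tau_{\mathrm{per},0}$ for a.e.\ $t$.

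Next, fix such a $t$ and apply Theorem~\ref{thm:exist-unique}(ii). This produces a unique $\boldsymbol v(t,\cdot)\in L^2_{\mathrm{per},0}$ with $(\mathbf F\cdot\nabla)\boldsymbol v=\boldsymbol h$, given by $\hat{\boldsymbol v}(t,\boldsymbol k)=\hat{\boldsymbol h}(t,\boldsymbol k)/(2\pi\mathrm i\,\mathbf F\cdot\boldsymbol g(\boldsymbol k))$ for $\boldsymbol k\neq\boldsymbol 0$. Since the Diophantine condition implies $\mathcal R=\emptyset$, the hypothesis of part (iii) holds vacuously, and $\nabla\cdot\boldsymbol v(t,\cdot)=0$. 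By identity \eqref{eq:curl_cross}, a divergence-free $\boldsymbol v$ satisfies $\nabla\times(\boldsymbol v\times\mathbf F)=(\mathbf F\cdot\nabla)\boldsymbol v=\boldsymbol h$, which is exactly \eqref{eq:linearized_induction}. Uniqueness in $\mathbb X$ follows from the same reduction: any divergence-free solution of \eqref{eq:linearized_induction} solves \eqref{eq:fnablaV_divfree} with this $\boldsymbol h$, and we invoke Theorem~\ref{thm:exist-unique}(i) (equivalently Theorem~\ref{thm:main-theorem-uniqueness-torus}) with $\mathcal R=\emptyset$.

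It remains to check that $\boldsymbol v\in L^2(0,T;L^2_{\mathrm{per},0})$ and to prove the estimate. Measurability in $t$ follows because each Fourier coefficient $\hat{\boldsymbol v}(t,\boldsymbol k)$ is a fixed scalar multiple of the measurable function $\hat{\boldsymbol h}(t,\boldsymbol k)$. Equivalently, $\boldsymbol v(t)=M\boldsymbol h(t)$ for a bounded Fourier multiplier $M:H^\tau_{\mathrm{per},0}\to L^2_{\mathrm{per},0}$, so $\boldsymbol v$ is Bochner measurable. The pointwise-in-time bound from the proof of Theorem~\ref{thm:exist-unique}(ii) is
\[
\|\boldsymbol v(t)\|_{L^2}\le \frac{1}{2\pi C}\|\boldsymbol h(t)\|_{H^\tau}.
\]
Squaring and integrating over $(0,T)$ gives the stated stability estimate.

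I expect the main subtlety to be bookkeeping rather than depth. One issue is the normalization of the $H^\tau$ norm: it must dominate $\sum_{\boldsymbol k\neq\boldsymbol 0}|\boldsymbol k|^{2\tau}|\hat{\boldsymbol h}|^2$, which holds with constant one for the standard weight $(1+|\boldsymbol k|^2)^{\tau}$. The other is justifying the mean-zero condition on $\boldsymbol h$ from assumption 4 via the time derivative. If $\tau$ is not an integer, I will interpret $H^\tau$ through its Fourier definition, and the argument is unchanged.
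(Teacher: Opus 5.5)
Your proposal is correct and follows the paper's proof. As in the paper, you define $\boldsymbol h=\partial_t\boldsymbol b-\eta\nabla^2\boldsymbol b$, check that it lies in $L^2(0,T;H^\tau_{\mathrm{per},0})$ and is divergence-free, apply Theorem~\ref{thm:exist-unique} for a.e.\ $t$, and integrate the pointwise Diophantine bound in time. Your extra bookkeeping fills in steps the paper leaves implicit: the time-derivative argument giving $\hat{\boldsymbol h}(t,\boldsymbol 0)=\boldsymbol 0$, Bochner measurability via the Fourier multiplier, explicit use of parts (iii) and (i) together with the curl identity, and the $H^\tau$ normalization.
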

		
\begin{proof}
Define, for $(t,\boldsymbol x)\in(0,T)\times\Omega$,
\[
\boldsymbol h(t,\boldsymbol x)
:= \partial_t \boldsymbol b(t,\boldsymbol x)-\eta \nabla^2 \boldsymbol b(t,\boldsymbol x).
\]
Then $\boldsymbol h\in L^2(0,T;H^{\tau}_{\mathrm{per}}(\Omega; \mathbb{R}^d))$. Moreover, since $\nabla\cdot \boldsymbol b=0$ and
$\nabla\cdot\nabla^2=\nabla^2\nabla\cdot$,
\[
\nabla\cdot \boldsymbol h
=\partial_t(\nabla\cdot \boldsymbol b)-\eta \nabla^2(\nabla\cdot \boldsymbol b)=0.
\]
Since $\int_{\Omega}\boldsymbol b(t,\boldsymbol x)\,d\boldsymbol x=0$ for a.e.\ $t\in(0,T)$, we have $\int_{\Omega}\boldsymbol h(t,\boldsymbol x)\,d\boldsymbol x=0$ for a.e.\ $t$.

By Theorem~\ref{thm:exist-unique}, for a.e. $t\in[0,T]$ there exists a unique,
zero-mean, divergence-free function $\boldsymbol v(t,\cdot)\in L^2_{\mathrm{per},0}(\Omega; \mathbb{R}^d)$ satisfying
${{} (\mathbf{F}\cdot\nabla)\boldsymbol v(t,\cdot)=\boldsymbol h(t,\cdot)\textrm{ in }\Omega}$.
Moreover, the operator $(\boldsymbol{F}\cdot \nabla)^{-1}:H^{\tau}_{\textrm{per},0}(\Omega; \mathbb{R}^d)\rightarrow L^2_{\textrm{per},0}(\Omega; \mathbb{R}^d)$ is bounded with
\[
\|\boldsymbol v(t,\cdot)\|_{L^2_{\mathrm{per}}(\Omega; \mathbb{R}^d)} \le \frac{1}{2\pi C} \|\boldsymbol h(t,\cdot)\|_{H^\tau_{\mathrm{per}}(\Omega; \mathbb{R}^d)}
\quad\textrm{for a.e. }t\in(0,T),
\]
with the constant $C$ given by \eqref{eq:diophantine}.
Since $\boldsymbol h\in L^2(0,T;H^\tau_{\mathrm{per}}(\Omega; \mathbb{R}^d))$, integrating over $t\in[0,T]$ yields $\boldsymbol v\in L^2(0,T;L^2_{\mathrm{per},0}(\Omega; \mathbb{R}^d))$,
i.e.\ $\boldsymbol v\in\mathbb X$.

\medskip
This proves conditions for which $\boldsymbol b$ determines a unique velocity field $\boldsymbol v\in\mathbb X$
satisfying ${{} (\mathbf{F}\cdot\nabla)\boldsymbol v = (\partial_t \boldsymbol b(t,\boldsymbol x)-\eta \nabla^2 \boldsymbol b(t,\boldsymbol x))}$ (equivalently, \eqref{eq:linearized_induction}).
\end{proof}

 Theorems \ref{thm:main-theorem-uniqueness-torus} and~\ref{main-theorem} demonstrate that the solvability and uniqueness of the velocity reconstruction on the torus depend fundamentally on the arithmetic relationship between the background field $\mathbf{F}$ and the domain dimensions $\boldsymbol{L}$. If the components satisfy the incommensurability condition~\eqref{rationality}, the transport direction induced by $\mathbf{F}$ never aligns with the periodic lattice of $\boldsymbol{L}$. Consequently, the resonant set $\mathcal{R}$ is empty, and the velocity field is uniquely determined. If this condition fails, the problem admits a nontrivial kernel, and uniqueness is lost. While incommensurability ensures uniqueness, the Diophantine condition \eqref{eq:diophantine} provides existence and stability by ensuring that the recovered velocity field is a well-defined $L^2$ function.

		\section{Concluding Remarks and Open Questions}

In this paper, we have provided a rigorous treatment of the existence and uniqueness of solutions to the inverse problem for the linearized magnetic induction equation, establishing conditions under which an incompressible velocity field can be uniquely reconstructed from measurements of the induced magnetic perturbation. By separating the reconstruction into the evaluation of the source term from the observed field and the inversion of a steady transport operator, we showed that the identifiability of the velocity field is governed by geometric and arithmetic properties of the background magnetic field and the spatial domain. 

Several avenues for further investigation in applied analysis are suggested by this work. Here, it is assumed that the field $\boldsymbol{b}$ is known everywhere. A natural extension would be to relax this assumption and prove conditions on uniqueness and stability of the reconstruction by imposing additional constraints on $\boldsymbol{v}$, e.g., dynamical constraints requiring $\boldsymbol{v}$ to solve a steady Stokes or Navier--Stokes system. The inclusion of the nonlinear term $\boldsymbol{v} \times \boldsymbol{b}$ represents a substantial leap in complexity. Furthermore, a natural progression of this theoretical work is the development of robust computational algorithms for velocity reconstruction in the whole-space case.

Overall, the results presented here provide a theoretical foundation for understanding when and how fluid velocities can be inferred from magnetic field observations, clarifying the interplay between geometry, transport dynamics, and inverse reconstruction in magnetohydrodynamic systems.
		%%%%%%%%%%%%%%%%%%%%%%%%%%%%%%%%%%%%%%%%%%%%%%%%%%%%%%%%%%%%%%%%%%%%%%
		
		\section*{Acknowledgment}
		
		This work was supported by ONR N00014-24-1-2095 and ONR N00014-24-1-2088. The authors would like to acknowledge insightful discussions with David Shirokoff and Haomin Zhou.
        CF also acknowledges the J. Tinsley Oden Faculty Fellowship Research Program.\\

       The authors assume responsibility for all content.

		\bibliographystyle{unsrt}
		\bibliography{biblio}

\appendix

\end{document}